\documentclass[reqno]{amsart}

\oddsidemargin-0.2cm
\evensidemargin-0.2cm
\textwidth16cm
\textheight24cm
\topmargin-1cm

\usepackage{mathrsfs}
\usepackage{latexsym}
\usepackage{amsfonts}

\usepackage{hyperref}

\newtheoremstyle{myplain}{}{}{\it}
{0pt}{\scshape}{}{ }{\thmname{#1}\thmnumber{ #2}\thmnote{ (#3)}}
\newtheoremstyle{mydefinition}{}{}{}
{0pt}{\scshape}{}{ }{\thmname{#1}\thmnumber{ #2}\thmnote{ (#3)}}

\theoremstyle{myplain}

    \newtheorem{Def}{Definition}[section]
        \newtheorem{Lem}[Def]{Lemma}
        \newtheorem{theo}[Def]{Theorem}
        
        \newtheorem{rem}[Def]{Remark}
        \newtheorem{hyp}[Def]{Hypothesis}

\newcommand{\skp}[2]{\mbox{$\left\langle #1\, , \, #2\right\rangle$}}

\newcommand{\skpR}[2]{\mbox{$\left\langle #1\, ,\,#2\right\rangle_{L^2}$}}
\newcommand{\skpSi}[2]{\mbox{$\left\langle #1\, ,\,#2\right\rangle_{L^2(\Sigma)}$}}

\DeclareMathOperator{\dist}{dist}

\DeclareMathOperator{\supp}{supp}

\DeclareMathOperator{\Op}{Op}

\newcommand{\id}{\mathbf{1}}

\numberwithin{equation}{section}

\newcommand{\beqa}{\begin{eqnarray*}}
\newcommand{\eeqa}{\end{eqnarray*}}
\renewcommand{\hat}{\widehat}
\newcommand{\bauf}{\begin{itemize}}
\newcommand{\eauf}{\end{itemize}}
\newcommand{\be}{\begin{equation}}
\newcommand{\ee}{\end{equation}}
\newcommand{\ben}{\begin{enumerate}}
\newcommand{\een}{\end{enumerate}}
\newcommand{\ra}{\rightarrow}

\renewcommand{\O}{\Omega}
\newcommand{\ep}{\varepsilon}

\newcommand{\R}{{\mathbb R} }
\newcommand{\Z}{{\mathbb Z}}
\newcommand{\C}{{\mathbb C}}
\newcommand{\N}{{\mathbb N}}

\newcommand{\disk}{(\varepsilon {\mathbb Z})^d}

\newcommand{\Ce}{\mathscr C}

\newcommand{\De}{\mathscr D}

\newcommand{\E}{{\mathcal E}}

\newcommand{\B}{{\mathcal B}}
\newcommand{\TE}{\mathcal{T}}
\newcommand{\VE}{\mathcal{V}}
\newcommand{\te}{\mathfrak t}

\title{Agmon-type estimates for a class of jump processes}

\author{Markus Klein, Christian L\'eonard \and Elke Rosenberger}

\address{Markus Klein\\ Universit\"at Potsdam\\ Institut f\"ur Mathematik \\ Am Neuen Palais 10\\ 14469 Potsdam }
\email{mklein@math.uni-potsdam.de}
\address{Christian L\'eonard\\ Universit\'e Paris Ouest\\  }
\email{christian.leonard@u-paris10.fr}
\address{
Elke Rosenberger\\ Universit\"at Potsdam\\ Institut f\"ur Mathematik \\ Am Neuen Palais 10\\ 14469 Potsdam }
\email{erosen@uni-potsdam.de}

\date{\today}
\thanks{The authors gratefully acknowledge financial support of the Deutsch-Franz\"osische Hochschule through DFDK-01-06, which made this work possible.}

\keywords{Decay of eigenfunctions, semiclassical Agmon estimate, Finsler distance,  jump process, Dirichlet-form}

\subjclass{60G99, 31C25, 81Q20, 35B40}

\begin{document}

\begin{abstract}
In the limit $\ep \to 0$ we analyze the generators $H_\ep$ of families of reversible jump processes in $\R^d$ associated with a class of 
symmetric non-local Dirichlet-forms and show exponential decay of the eigenfunctions. The exponential rate function is a Finsler distance, 
given as solution of a certain eikonal equation. Fine results are sensitive to the rate function being $\Ce^2$ or just Lipschitz. 
Our estimates are analog to the semiclassical Agmon estimates for differential operators of second order. 
They generalize and strengthen previous results on the lattice $\ep \Z^d$. Although our final interest is in the
(sub)stochastic jump process, technically this is a pure analysis paper, inspired by PDE techniques.
\end{abstract}

\maketitle

\section{Introduction}

We derive exponential decay results on eigenfunctions of a family of self adjoint generators $H_\ep,\, \ep\in (0,\ep_0],$  of (substochastic)
jump processes in $\R^d$ in the limit 
$\ep \to 0$. 
The basic idea behind our estimates is due to Agmon \cite{agmon}: The positivity of the quadratic form associated with a certain (weighted) operator
$H$ (on wavefunctions with support in a specific region) is related to exponential decay of solutions of $Hu=f$ in that region in weighted $L^2-$sense, 
and the (optimal) rate function admits a geometric interpretation as a geodesic distance (which is Riemannian if $H$ is a strongly elliptic differential 
operator of second order, but Finsler in our context). 

A semiclassical version of the Agmon estimate (for the Schr\"odinger operator) was developed in \cite{hesjo} by Helffer and Sj\"ostrand, who also 
applied such arguments in their analysis of Harper's equation \cite{harper} to a specific difference equation. In 
\cite{kleinro} a semiclassical Agmon estimate was proved for a classs of difference operators on the lattice $\ep \Z^d$, identifying the rate function as 
a Finsler distance.

We recall from \cite{hesjo} (for the Schr\"odinger operator) and from \cite{kleinro} - \cite{kleinro4} (for jump processes on the lattice $\disk$) 
that such estimates are an important first 
step to analyze the tunneling problem for a general multiwell problem, i.e. the appearance of exponentially close eigenvalues which can be
thought of as being produced by an interaction between the wells. It is our main goal to develop this analysis in the context of (substochastic) 
jump processes as 
considered in this paper. Our motivation comes from previous work on metastability (both for discrete Markov chains in \cite{begk1,begk2} and
continuous diffusions in \cite{begk3, begk4}). In both cases, exponential eigenvalue splitting for the low lying spectrum of the
(discrete or infinitesimal) generator has a direct interpretation in terms of exponentially long expected metastable transition times between
the wells. Conceptually, our small parameter $\ep$ plays the role of a semi-classical parameter $h$ (which
is usual in the context of Schr\"odinger´s equation). It seems that at present a PDE inspired approach to the tunneling problem (analog to 
the semiclassical  analysis for the Schr\"odinger operator) finally gives the sharpest results on the spectrum in cases of high regularity.
See \cite{hkn}, where the use of the semiclassical Witten complex sharpens the results in \cite{begk3, begk4} (originally proved via
potential theory) to full asymptotic expansions, in the case of continuous diffusions. For jump processes, even on $\disk$, final results at
this level of precision are not yet published (but see \cite{kleinro5} for the general substochastic case  on $\disk$, and the forthcoming
dissertation of G. Di Gesu \cite{Gia}, which develops the analog of the Witten complex for stochastic jump processes on $\disk$).

The jump processes considered in this paper are associated with non-local Dirichlet forms on the real Hilbert space $L^2(\R^d)$:
\begin{hyp}\label{Hypo1}
Let $\E_\ep,\, \ep\in (0,\ep_0],$ be a family of bilinear forms on $L^2(\R^d, dx)$ with domains $\De(\E_\ep)$ given by
\begin{align}\label{Eform}
\E_\ep (u,v) &:= \frac{1}{2} \int_{\R^d} \!dx \int_{\R^d\setminus\{0\}}\! (u(x) - u(x+\ep \gamma)) (v(x) - v(x+\ep \gamma))\, 
K_\ep (x, d\gamma) + \int_{\R^d} V_\ep (x) u(x) v(x) \, dx \\
 \De(\E_\ep) &= \{ u\in L^2 (\R^d, dx)\, |\,  \E_\ep (u,u) < \infty \} \nonumber\; ,
\end{align}
where for all $\ep\in(0, \ep_0]$
\ben
\item $V_\ep(x)\, dx$ is a positive Radon measure on $\R^d$ 
\item for $x\in\R^d$, $K_\ep(x,\, .\,)$ is a positive Radon measure on the Borel sets $\B (\R^d\setminus\{0\})$
satisfying
\ben
\item[(i)]
$K_\ep (x, E) < \infty$ for all $E\in \B (\R^d\setminus\{0\})$ with $\dist (E, 0) \geq \delta >0$ 
\item[(ii)] $\int_{|\gamma | \leq 1} |\gamma|^2 K_\ep(x, d\gamma) \leq C$ locally uniformly in $x\in\R^d$
\item[(iii)] $K_\ep (x, d\gamma)\, dx$ is a reversible measure on $Y:= \R^d\times \R^d\setminus\{0\}$ in the sense that for all
non-negative $\phi, \psi \in \Ce_0(\R^d)$
\begin{equation}\label{revers}
 \int_{Y} \phi (x+\ep \gamma) \psi (x) K_\ep (x,\, d\gamma)\, dx =  \int_{Y} \phi (x) \psi (x+\ep \gamma) K_\ep (x,\, d\gamma)\, dx\; .
\end{equation}
\een
\een
\end{hyp}

We shall formally denote the reversibility condition \eqref{revers} as 
\begin{equation}\label{revers2}
 K_\ep (x,\, d\gamma)\, dx = K_\ep (x+\ep \gamma,\, -d\gamma) \, dx\; ,
\end{equation}
where the right hand side denotes the Radon measure on $Y$ given by
\[
 \int_{Y} f(x, \gamma) K_\ep (x + \ep \gamma, -d\gamma) \, dx := \int_{Y} f(x, -\gamma) K_\ep (x + \ep \gamma, d\gamma) \, dx 
:= \int_{Y} f(x - \ep \gamma, -\gamma) K_\ep (x, d\gamma) \, dx \; ,
\]
and (abusing notation) we shall even cancel $dx$ on both sides of \eqref{revers2}.\\

Assuming Hypothesis \ref{Hypo1}, $\E_\ep$
is a Dirichlet form (i.e. closed, symmetric and Markovian) and $\Ce^\infty_0(\R^d) \subset \De(\E_\ep)$ for all $\ep\in (0,\ep_0]$ (see 
Fukushima-Oshima-Takeda \cite{Fu}).

The general theory of Dirichlet forms $\E$ analyzed in \cite{Fu} covers the case, where $(\R^d, dx)$ is replaced by $(X, m)$ if $X$ is a locally compact 
separable metric space and $m$ is a positive Radon measure on $X$ with $\supp  m = X$, provided that $\De(\E)$ is dense in
$L^2(X, m)$.

In particular, this is true for $X=(\ep \Z)^d$ and $m$ being the counting measure on $X$. In this situation, we proved similar decay results in
\cite{kleinro}
and \cite{thesis}, with $K_\ep(x, m(d\gamma)) = - a_{\ep\gamma}(x; \ep) m(d(\ep\gamma))$ as a measure on $\Z^d\setminus\{0\}$
(in fact, we treated a slightly more general case where the form $\E_\ep$ instead of being positive is only semibounded,
$\E_\ep (u,u) \geq -C\ep$ for some $C>0$ and $\ep\in (0,\ep_0]$). 

In this paper, we focus on the complementary and much more singular case $X=\R^d$. We remark that, combining the results
of \cite{kleinro} with this paper, one could treat the case where $X$ is an arbitrary closed subgroup of $(\R^d, +)$ and $m$ is Haar measure
on $X$. Since our methods depend on some elements of Fourier analysis, this is a natural framework for our results.

To control the limit $\ep\to 0$, we shall impose stronger conditions on $K_\ep$ and $V_\ep$.

\begin{hyp}\label{Hypo2}
\begin{enumerate}
\item The measure $K_\ep(x,\, . \, )$ satisfies
\begin{equation}\label{entwK}
 K_\ep(x, \, . \,) = K^{(0)}(x, \, . \,) + R^{(1)}_\ep (x, \, . \,) \,
\qquad (x \in\R^d)\, ,
\end{equation}
where
\begin{enumerate}
\item[(i)] for any $c>0$ there exists $C>0$ such that uniformly
with respect to $x\in\disk$ and $\ep\in (0,\ep_0]$
\begin{align}\label{abfallagamma1}
\int_{|\gamma | \geq 1} e^{c|\gamma|} K^{(0)}(x, d\gamma) \leq C \qquad &\text{and} \qquad
\int_{|\gamma |\geq 1} e^{c|\gamma|} \bigl|R_\ep^{(1)}(x, d\gamma)\bigr|  \leq C\ep \\
\int_{|\gamma | \leq 1} |\gamma|^2 K^{(0)}(x, d\gamma)  \leq C \qquad &\text{and} \qquad
\int_{|\gamma | \leq 1}  |\gamma|^2 \bigl|R_\ep^{(1)}(x, d\gamma)\bigr|  \leq C\ep \label{abfallagamma2}
\end{align}
\item[(ii)] for all $x\in\R^d$ there exists $c_x>0$ such that for all $v\in\R^d$
\begin{equation}\label{convexab} 
\int_{\R^d\setminus\{0\}} (\gamma \cdot v)^2 K^{(0)} (x, d\gamma) \geq c_x \|v\|^2 \; . 
\end{equation}
\end{enumerate}
\item 
\ben
\item[(i)]
The potential energy $V_\ep\in\Ce^2 (\R^d, \R)$ satisfies
\[
V_{\ep}(x) = V_0 (x) + R_{1}(x;\ep)  \, ,
\]
where $V_0\in\Ce^2(\R^d)$, $R_{1}\in \Ce^2 (\R^d\times (0,\ep_0])$ 
and for any compact set $K\subset \R^d$
there exists a constant $C_K$ such that $\sup_{x\in K} |R_{1}(x;\ep)|\leq C_K \ep$.
\item[(ii)]
$V_0(x)\geq 0$ and it takes the value $0$ only at a finite number of non-degenerate minima
$x_j,$  i.e. 
$D^2 V_0|_{x_j} >0, \; j\in \mathcal{C} =\{1,\ldots , r\}$,
which we call potential wells.
\een
\end{enumerate}
\end{hyp}

We remark that combining the positivity of the measure $K_\ep(x, \, .\,)$ with Hypothesis \ref{Hypo2}(a), it follows that
$K^{(0)}(x,\, . \,)$ is positive while $R^{(1)}_\ep(x,\, . \,)$ is possibly signed.

It is well known (see e.g. \cite{Fu}) that $\E_\ep$ uniquely determines a self adjoint operator $H_\ep$ in $L^2(\R^d)$. 
To introduce Dirichlet boundary conditions for $H_\ep$ on some open set $\Sigma\subset \R^d$, one considers the form
\begin{equation}\label{Dirichlet1} 
\tilde{\E}_\ep^\Sigma (u,v) = \E_\ep (u,v)\quad\text{with domain}\quad \De ( \tilde{\E}_\ep^\Sigma)=\Ce_0^\infty(\Sigma)\, .
\end{equation} 
Then $\tilde{\E}_\ep^\Sigma$ is Markovian (see \cite{Fu}, ex. 1.2.1) and closable. In fact, if we consider $L^2(\Sigma)$ as a subset of $L^2(\R^d)$ 
(extend
$f\in L^2(\Sigma)$ to $\R^d$ by zero), the form
\begin{equation}\label{Neumann1} 
\hat{\E}_\ep^\Sigma (u,v) = \E_\ep(u,v)\quad\text{with domain}\quad \De(\hat{\E}_\ep^\Sigma) = \{u\in L^2(\Sigma)\, |\, \ \hat{\E}_\ep^\Sigma (u,u) 
< \infty\}
\end{equation}
- corresponding to Neumann boundary conditions - is a closed (Markovian) extension of $\tilde{\E}_\ep^\Sigma$ (see \cite{Fu}, ex. 1.2.4), giving
closability of $\tilde{\E}_\ep^\Sigma$.

\begin{Def}\label{Def1} We denote by $\E^\Sigma_\ep$ the closure of $\tilde{\E}^\Sigma_\ep$ given in \eqref{Dirichlet1}. The 
operator $H_\ep^\Sigma$ with Dirichlet boundary conditions on $\Sigma$ is the unique self adjoint operator associated to $\E^\Sigma_\ep$.
The unique self adjoint operator $\hat{H}_\ep^{\Sigma}$ associated to $\hat{\E}_\ep^\Sigma$ defined in \eqref{Neumann1} 
represents Neumann boundary conditions on $\Sigma$.
\end{Def}

By \cite{Fu}, Thm. 3.1.1, $\E^\Sigma_\ep$ is Markovian (as the closure of a Markovian form) and thus a Dirichlet form.
In particular, since $\E^\Sigma_\ep$ is a restriction of $\hat{\E}^\Sigma_\ep$, we have for $u,v\in\De (\E_\ep^\Sigma)$,
\begin{align}\label{ESigma}
 \E^\Sigma_\ep (u,v) &= \TE^\Sigma_\ep (u,v) + \VE^\Sigma_\ep (u,v) \qquad\text{with}\\
\TE_\ep^\Sigma (u,v) &:= \frac{1}{2} \int_{\Sigma} dx \int_{\Sigma'(x)} (u(x) - u(x+\ep \gamma)) (v(x) - v(x+\ep \gamma))\, 
K_\ep (x, d\gamma) \quad\text{and} \label{TSigma} \\
\VE_\ep^\Sigma (u,v) &:= \int_{\Sigma} V_\ep (x) u(x) v(x) \, dx \label{VSigma}\\
\text{where}&\quad \Sigma'(x) := \{\gamma\in\R^d\setminus\{0\}\,|\, x+\ep\gamma\in\Sigma\}\label{Sigma'}.
\end{align}
Similarly, $\hat{\E}_\ep^\Sigma = \hat{\TE}_\ep^\Sigma + \hat{\VE}_\ep^\Sigma$.
We remark that $\TE_\ep^\Sigma$, $\VE_\ep^\Sigma$. $\hat{\TE}_\ep^\Sigma$ and $\hat{\VE}_\ep^\Sigma$ are again Dirichlet forms,
in particular they are positive.\\
We will use the notation $q[u] := q(u,u)$ for the quadratic form associated to any bilinear form $q$.\\

Concerning the operator $H_\ep$ associated to $\E_\ep$ we remark that, even assuming Hypothesis \ref{Hypo2} in addition to Hypothesis 
\ref{Hypo1}, it is far from trivial to characterize the domains
$\De(H_\ep)$ and $\De(H_\ep^\Sigma)$. Without additional assumptions, $H_\ep =: T_\ep + V_\ep$ (or $H_\ep^\Sigma$) is not even defined on 
$\Ce_0^\infty(\R^d)$ (or $\Ce_0^\infty(\Sigma)$ resp.). 
However, there are some cases for which we can give formulae for $T_\ep u$ on subsets of its domain.
\ben
\item If the measure $K_\ep(x,\, \,.\,)$ is finite uniformly with respect to $x\in\R^d$, one has
\[
 T_\ep u(x) = \int_{\R^d\setminus\{0\}} (u(x) - u(x+\ep \gamma)) K_\ep (x,\, d\gamma) 
\]
and $T_\ep$ is bounded on $L^2(\R^d)$.
\item 
If $K_\ep(x, d\gamma) = k_\ep (x, \gamma)\, d\gamma$, where $k_\ep \in \Ce (\R^d\times \R^d\setminus\{0\})$ is
Lipschitz in $x\in\R^d$, locally uniformly with respect to $\gamma\in \R^d\setminus\{0\}$, then 
$\Ce^2_0(\R^d)\subset \De(H_\ep)$ and, for $u\in \Ce^2_0(\R^d)$, 
\begin{multline}\label{operator2}
 T_\ep u(x) = \int_{\R^d\setminus\{0\}} \bigl(2 u(x) - u(x+\ep\gamma) - u(x-\ep\gamma)\bigr) k_\ep(x, \gamma) \, d\gamma \\
+\int_{\R^d\setminus\{0\}} \bigl(u(x) - u(x - \ep\gamma) \bigr) \bigl(k_\ep(x-\ep\gamma, \gamma) - k_\ep (x, \gamma)\bigr) \, d\gamma\; .
\end{multline}
\item 
If $K_\ep(x, d\gamma) = K_\ep (x, -d\gamma)$, then $\Ce_0^2(\R^d)\subset \De (H_\ep)$ and for $u\in \Ce_0^2(\R^d)$ one even 
has  the simpler form
\begin{equation}\label{operator3}
 T_\ep u(x) = \int_{\R^d\setminus\{0\}} \bigl( u(x) - u(x + \ep\gamma) - \ep \gamma \nabla u(x)\bigr) K_\ep (x, d\gamma)\; .
\end{equation}
\item In the case of a Levy-process, i.e. if $K_\ep (x, \, d\gamma) = K_\ep (d\gamma)$ (which by reversibility, see \eqref{revers2}, implies 
$K_\ep (d\gamma) = K_\ep (-d\gamma)$) one has both the representation \eqref{operator3} and (since the second term
on the rhs of \eqref{operator2} formally vanishes)
\[
 T_\ep u(x) = \int_{\R^d\setminus\{0\}} \bigl(2 u(x) - u(x+\ep\gamma) - u(x-\ep\gamma)\bigr) K_\ep(d\gamma) \; .
\]
\een
Similar formulae hold for the operators with Dirichlet (and Neumann) boundary conditions. In this paper, we shall need none of them,
since we shall  directly work  with the Dirichlet form \eqref{Eform}.\\

We define $t_0: \R^{2d} \rightarrow \R$ as
\begin{equation}\label{t_0}
t_0(x,\xi) := \int_{\R^d\setminus\{0\}} \Bigl(1- \cos \bigl(\eta\cdot \xi\bigr)\Bigr) K^{(0)}(x, d\gamma)  \; ,
\end{equation}
which in view of Hypothesis \ref{Hypo2}(a),(ii) extends to an entire function in $\xi\in\C^d$, and we set
\begin{equation}\label{tildetdef}
\tilde{t}_0(x, \xi) := - t_0(x, i\xi) = \int_{\R^d\setminus\{0\}} \Bigl(\cosh \bigl(\eta\cdot \xi\bigr) - 1\Bigr) K^{(0)}(x, d\gamma)  \, , \qquad (x,\xi\in\R^d)\; .
\end{equation}

We remark that $t_0$ formally is the principal symbol $\sigma_p (T_\ep)$ - the leading order term in $\ep$ of the symbol - associated to the 
operator $T_\ep$ 
under semiclassical quantization (with $\ep$ as small parameter). Recall that for a symbol 
$b\in \Ce^\infty(\R^{2d}\times (0,\ep_0))$, the corresponding operator
is (formally) given by
\[
\Op_\ep(b)\, v(x) := (\ep 2\pi)^{-d} \int_{\R^{2d}} e^{\frac{i}{\ep}(y-x)\xi}\,
b(x,\xi;\ep)v(y) \, dy \, d\xi \, ,\qquad v\in\mathscr{S}(\R^d)\, ,
\]
(for details on pseudo-differential operators see e.g. Dimassi-Sj\"ostrand \cite{dima}).

In particular, the translation operator $\tau_{\pm\ep\gamma}$ acting as $\tau_{\pm\ep\gamma} u(x) = u(x \pm\ep\gamma)$, has
the $\ep$-symbol $e^{\mp i\gamma\xi}$. Thus, writing $T_\ep$ formally as
\[ 
\frac{1}{2}\int_{\R^d\setminus\{0\}} \bigl(\id - \tau_{-\ep\gamma}\bigr) K_\ep (x, d\gamma) \bigl(\id - \tau_{\ep\gamma}\bigr) 
\]
and using $\sigma_p(A \circ B) = \sigma_p(A) \sigma_p(B)$ for the principal symbols of operators $A, B$, immediately
gives $t_0 = \sigma_p(T_\ep)$ given in \eqref{t_0}.

We emphasize, however, that under the weak regularity assumptions given in Hypotheses \ref{Hypo1} and \ref{Hypo2}, $T_\ep$ is
not an honest pseudo-differential operator (i.e. one with a $\Ce^\infty$-symbol, for which the symbolic calculus holds), but only a
quantization of a singular symbol, giving a map ${\mathscr S}(\R^d) \rightarrow {\mathscr S}' (\R^d)$ (see \cite{dima}). 

We shall now assume

\begin{hyp}\label{Hypo3}
Given Hypotheses \ref{Hypo1} and \ref{Hypo2}, $\Sigma\subset \R^d$ is an open bounded set with $x_j\in \Sigma$ for exactly one 
$j\in\mathcal{C}$ and 
$x_k\notin \overline{\Sigma}$ for $k\in\mathcal{C}, k\neq j$. Moreover there is an open set $\O\subset\Sigma$ containing $x_j$ and a 
Lipschitz-function
$d: \overline{\Sigma}\rightarrow [0,\infty))$ satisfying, for $\tilde{t}_0$ defined in \eqref{t_0},
\ben
\item $d(x_j)=0$ and $d(x)\neq 0$ for $x\neq x_j$.
\item $d\in \Ce^2(\overline{\O})$.
\item the (generalized) eikonal equation holds  in some neighborhood $U\subset\O$ of $x_j$, i.e.
\begin{equation}\label{eikonal}
 \tilde{t}_0(x, \nabla d(x)) = V_0(x)\qquad\text{for all}\quad x\in U\; .
\end{equation}
\item the (generalized) eikonal inequality holds in $\Sigma$, i.e.
\begin{equation}\label{eikonalun}
\tilde{t}_0(x, \nabla d(x)) - V_0(x) \leq 0 \qquad\text{for all}\quad x\in \Sigma\; .
\end{equation}
\een
 \end{hyp}

We remark that in a more regular setting, i.e. if $\tilde{h}_0:= \tilde{t}_0 - V_0\in\Ce^\infty (\R^{2d})$, such a function $d$ may be
constructed  as a distance in a certain Finsler metric associated with $\tilde{h}_0$ (see \cite{kleinro}, which introduces Finsler
metrics for hyperregular Hamiltonians on $T^*M$, where $M$ is a $\Ce^\infty$-manifold), if $\Sigma$ avoids the
cut locus. We recall that for some special Hamilton functions on $\R^{2d}$ Finsler distances were used in \cite{matte} to solve
Hamilton-Jacobi equations (but not to describe the decay of eigenfunctions), similarly to the somewhat older results in \cite{tinta}.
In both papers, the cut-locus (where the Finsler distance is only Lipschitz) is excluded in the applications by assumption.
The solution of Hamilton-Jacobi equations in $\R^{2d}$ by Finsler distances in very general situations of low regularity is discussed
e.g. in \cite{sico}. 
We shall discuss the Finsler distance $d$ in the case of low regularity and its relation to large deviation results for jump
processes (see e.g. \cite{leonard}) in a future publication, including the relation between the Hamiltonian and Lagrangian point of view.

Our central results are the following theorems on the decay of eigenfunctions of $H_\ep^\Sigma$ and $\hat{H}_\ep^\Sigma$.

\begin{theo}\label{weig}
Assume Hypotheses \ref{Hypo1}, \ref{Hypo2} and \ref{Hypo3} with $\Sigma = \O$.
Let $H_\ep^\Sigma$ and $\hat{H}_\ep^{\Sigma}$ be the operators with Dirichlet and Neumann boundary conditions from Definition \ref{Def1}.  

Fix $R_0>0$.
Then there exist constants $\ep_0, B, C>0$ such that for all $\ep\in(0,\ep_0]$, $E\in [0,\ep R_0]$
and real $u\in \De(H_\ep^\Sigma)$
\begin{equation}\label{weigequ}
\left\| \left(1+\tfrac{d}{\ep}\right)^{-B} e^{\frac{d}{\ep}} u
\right\|_{L^2(\Sigma)}
\leq
C \Bigl[ \ep^{-1}\left\| \left(1+\tfrac{d}{\ep}\right)^{-B}
e^{\frac{d}{\ep}}
\left(H_\ep^{\Sigma}-E\right)u\right\|_{L^2(\Sigma)} +
 \| u \|_{L^2(\Sigma)}  \Bigr]
\; .
\end{equation}
In particular, let
$u\in\De(H_\ep^\Sigma) $ be an eigenfunction of $H_\ep^\Sigma$
with respect to the eigenvalue
$E\in[0,\ep R_0]$,
then 
\begin{equation}\label{eigenu}
 \left\| \left(1+\tfrac{d}{\ep}\right)^{-B} e^{\frac{d}{\ep}}u\right\|_{L^2(\Sigma)} \leq
 C \| u \|_{L^2(\Sigma)}\, .
 \end{equation}
Analog results hold for $u\in\De (\hat{H}_\ep^\Sigma)$ and $u$ a normalized eigenfunction of  $\hat{H}_\ep^\Sigma$ respectively.\\
\end{theo}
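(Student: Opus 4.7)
The plan is to adapt Agmon's duality strategy to the non-local Dirichlet form $\E_\ep^\Sigma$. Introduce the regularized weight $\phi := d - B\ep\log(1+d/\ep)$, so that $e^{\phi/\ep} = (1+d/\ep)^{-B} e^{d/\ep}$; by Hypothesis \ref{Hypo3} with $\Sigma = \O$ one has $\phi \in \Ce^2(\overline\Sigma)$. Write $v := e^{\phi/\ep}u$ and pair $(H_\ep^\Sigma - E)u$ with $e^{2\phi/\ep}u$. The algebraic core is the elementary identity
\begin{equation*}
(u(x)-u(y))(f(x)^2 u(x) - f(y)^2 u(y)) = (f(x)u(x) - f(y)u(y))^2 - u(x)u(y)(f(x)-f(y))^2,
\end{equation*}
applied with $f = e^{\phi/\ep}$ and $y = x+\ep\gamma$. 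Substituting into \eqref{TSigma}, symmetrizing the cross term via \eqref{revers2}, and discarding the nonnegative quantity $\TE_\ep^\Sigma[v]$ leads to an Agmon-type inequality
\begin{equation*}
\skp{(H_\ep^\Sigma - E)u}{e^{2\phi/\ep}u} \geq \int_\Sigma \bigl(V_\ep(x) - W_\ep^\phi(x) - E\bigr) v(x)^2\, dx,
\end{equation*}
where $W_\ep^\phi(x) := \int_{\Sigma'(x)} \bigl(\cosh((\phi(x+\ep\gamma)-\phi(x))/\ep) - 1\bigr) K_\ep(x, d\gamma)$.

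The second step is to Taylor-expand $(\phi(x+\ep\gamma)-\phi(x))/\ep = \gamma\cdot\nabla\phi(x) + O(\ep|\gamma|^2)$ and integrate against $K_\ep(x,d\gamma)$. Splitting at $|\gamma|=1$ and invoking the exponential moment bound \eqref{abfallagamma1} for large jumps, the quadratic bound \eqref{abfallagamma2} for small jumps, and the $O(\ep)$ size of $R^{(1)}_\ep$, one obtains
\begin{equation*}
W_\ep^\phi(x) = \tilde t_0(x, \nabla\phi(x)) + \ep\, r_\ep(x)
\end{equation*}
with $r_\ep$ uniformly bounded on $\overline\Sigma$. Combining with $V_\ep = V_0 + O(\ep)$ from Hypothesis \ref{Hypo2}(b) and the eikonal inequality \eqref{eikonalun} produces $V_\ep - W_\ep^\phi - E \geq V_0 - \tilde t_0(\cdot,\nabla\phi) - C\ep$. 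Since $\nabla\phi = (1 - B(1+d/\ep)^{-1})\nabla d$, convexity of $\xi \mapsto \tilde t_0(x,\xi)$, which is guaranteed by Hypothesis \ref{Hypo2}(a)(ii), gives a quantitative defect of the form $\tilde t_0(x,\nabla d) - \tilde t_0(x,\nabla\phi) \geq c B (1+d/\ep)^{-1}\|\nabla d(x)\|^2$ away from $x_j$.

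To finish, I would implement an IMS-type decomposition: pick a cutoff $\chi \in \Ce^\infty_0(\R^d)$ supported in a small ball $U_0$ around $x_j$ on which $d = O(\ep)$ and $e^{\phi/\ep}$ is bounded. Outside $\supp\chi$ the previous step yields a positive coercive contribution $c\ep\|(1-\chi)v\|^2$, whereas the integral on $\supp\chi$ contributes at most $C\ep\|u\|^2$ (using boundedness of $e^{\phi/\ep}$ there). Taking $B$ large enough to dominate all $O(\ep)$ errors and the spectral cutoff $E = O(\ep)$ yields
\begin{equation*}
\skp{(H_\ep^\Sigma-E)u}{e^{2\phi/\ep}u} \geq c\ep\,\|v\|_{L^2(\Sigma)}^2 - C\ep\,\|u\|_{L^2(\Sigma)}^2.
\end{equation*}
Rewriting the left-hand side as $\skp{e^{\phi/\ep}(H_\ep^\Sigma-E)u}{v}$ and applying Cauchy--Schwarz followed by Young's inequality absorbs half of $\|v\|^2$ and produces exactly \eqref{weigequ}; the eigenfunction bound \eqref{eigenu} follows at once. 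The Neumann case is identical since the algebraic identity and the symmetrization via \eqref{revers2} do not rely on the vanishing of $u$ on $\partial\Sigma$; only $\Sigma'(x)$ is replaced by $\R^d\setminus\{0\}$.

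The main technical obstacle is the rigorous justification of the Taylor expansion of $W_\ep^\phi$ uniformly in $x\in\overline\Sigma$: the integrand grows like $e^{c|\gamma|}$ with $c$ controlled by $\|\nabla\phi\|_\infty \leq \|\nabla d\|_\infty$, so the exponential bound \eqref{abfallagamma1} must accommodate this particular $c$, and the second-order Taylor remainder $O(\ep|\gamma|^2)$ must be integrable against $K^{(0)}$ on bounded sets and against $e^{c|\gamma|}K^{(0)}$ outside, which is precisely what \eqref{abfallagamma1}--\eqref{abfallagamma2} provide. A secondary technical point is to verify that $e^{2\phi/\ep}u$ is an admissible element of $\De(\E_\ep^\Sigma)$; I would handle this by truncating $\phi$ at level $N\ep$, proving the estimate uniformly in $N$, and passing to the limit by monotone convergence.
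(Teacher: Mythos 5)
Your strategy is essentially the paper's own: conjugate the Dirichlet form by a logarithmically regularized weight, expand the resulting effective potential to identify $\tilde t_0(x,\nabla\phi)$ up to $O(\ep)$, exploit convexity of $\tilde t_0$ in $\xi$ together with the eikonal (in)equality to get positivity away from $x_j$, and absorb the region near $x_j$ (where the weight is $O(1)$) into $C\ep\|u\|^2$ before closing with Cauchy--Schwarz. The paper packages the first step as Lemma \ref{Hepconj}, the domain question as Lemma \ref{lemma3} (your truncation remark is hand-wavy; since $\phi$ is already bounded on $\overline\Sigma$ the real issue is stability of $\De(\E_\ep^\Sigma)$ under multiplication by $e^{\phi/\ep}$, which the paper proves by mollification), and the final bookkeeping as Lemma \ref{HepDchi}, with the functions $F_\pm$ playing the role of your cutoff $\chi$.

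Two details need repair. First, your quantitative defect $\tilde t_0(x,\nabla d)-\tilde t_0(x,\nabla\phi)\geq cB(1+d/\ep)^{-1}\|\nabla d\|^2$ is the wrong source of coercivity: outside the neighborhood $U$ only the eikonal \emph{inequality} holds, so $\nabla d$ may vanish at points where $d>0$ and $V_0>0$, and your displayed bound then yields nothing, so the claimed contribution $c\ep\|(1-\chi)v\|^2$ does not follow from it. The correct chain is $V_0-\tilde t_0(x,\lambda\nabla d)\geq V_0-\lambda\,\tilde t_0(x,\nabla d)\geq(1-\lambda)V_0$ using \eqref{convexun2} and \eqref{eikonalun}, followed by the comparison $V_0(x)/d(x)\geq C_0^{-1}$ (the paper's \eqref{Vnulld}), which in turn requires the nondegeneracy $d(x)\leq C|x-x_j|^2$ established in Lemma \ref{d} --- an ingredient missing from your outline. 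Second, with your choice $\phi=d-B\ep\log(1+d/\ep)$ the coefficient $\lambda=1-B(1+d/\ep)^{-1}$ is negative with $|\lambda|\to B-1>1$ as $x\to x_j$, so the scaling bound $\tilde t_0(x,\lambda\xi)\leq|\lambda|\,\tilde t_0(x,\xi)$, valid only for $|\lambda|\leq1$, does not apply there; your near-region must be taken to be exactly $\{d\lesssim B\ep\}$ and one must check separately, via \eqref{kinen} and $|\nabla d|=O(|x-x_j|)$, that $\tilde t_0(x,\nabla\phi)=O(\ep)$ there (with a $B$-dependent constant) so that the weight-bounded region really only contributes $C\ep\|u\|^2$. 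The paper sidesteps this entirely by flattening the weight: its $\Phi$ in \eqref{DefPhix} equals $d$ plus a constant on $\{d\leq B\ep/2\}$, so $\nabla\Phi=\nabla d$ there and the eikonal equation applies exactly. With these two corrections your argument closes and reproduces \eqref{weigequ}.
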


The following theorem gives a weaker result in the case that $d$ is only Lipschitz outside some small ball around $x_j$. Thus the cut-locus
is allowed to meet $\Sigma$.
Then we have to assume more regularity of $K^{(0)}$ with respect to $x$. 

\begin{theo}\label{Theorem2}
 Assume Hypotheses \ref{Hypo1}, \ref{Hypo2} and \ref{Hypo3} and let $H_\ep^\Sigma$ and $\hat{H}_\ep^{\Sigma}$ be the 
operators with Dirichlet and Neumann boundary conditions from Definition \ref{Def1}. Moreover assume that 
$K^{(0)}(\,.\,, d\gamma)$ is continuous in the sense that for all $c>0$
\begin{equation}\label{K0stetig}
 \int_{\gamma \in\R^d\setminus \{0\}}|\gamma|^2 e^{c|\gamma|} \bigl( K^{(0)}(x+h, d\gamma) - K^{(0)}(x, d\gamma)\bigr) =
 o(1)\qquad (|h|\to 0)
\end{equation}
locally  uniformly in $x\in\R^d$. Fix $R_0>0$ and a constant $D>0$ such that the ball $K:= \{x\in\R^d\,|\, d(x)\leq D\}$ is contained in $\O$. 
Then 
\ben
\item for any fixed $\alpha\in (0,1]$ there exist constants $C$, $\ep_\alpha>0$ such that for all $\ep\in (0,\ep_\alpha]$, $E\in [0,\ep R_0]$
and real $u\in \De (H_\ep^\Sigma)$
\begin{equation}\label{ab1}
\left\|  e^{\frac{(1-\alpha)d}{\ep}} u \right\|_{L^2(\Sigma)}
\leq C \Bigl[ \ep^{-1}\left\| e^{\frac{d}{\ep}}
\left(H_\ep^{\Sigma}-E\right)u\right\|_{L^2(\Sigma)} +
 \| u \|_{L^2(\Sigma)}  \Bigr] \; .
\end{equation}
\item there exists constants $C', B>0$ such that for any $\alpha\in (0,1]$ there exists $\Phi_\alpha\in\Ce^2(\overline{\Sigma})$ such that
\begin{align}\label{abinK}
 e^{\frac{d(x)}{\ep}}\frac{1}{C'}\left(1+\tfrac{d(x)}{\ep}\right)^{-\frac{B}{2}} &\leq
e^{\frac{\Phi_\alpha(x)}{\ep}} \leq
e^{\frac{d(x)}{\ep}} C'\left(1+\tfrac{d(x)}{\ep}\right)^{-\frac{B}{2}}\quad\text{for}\;\, x\in K\;\,\text{and}\\
e^{\frac{(1-\alpha) d(x)}{\ep}} & \leq e^{\frac{\Phi_\alpha(x)}{\ep}} \leq
e^{\frac{d(x)}{\ep}}\qquad\qquad\text{for}\;\, x\in \overline{\Sigma}\setminus K\label{abohneK}\; ,
\end{align}
and such that there exists $\alpha_0\in (0,1]$ such that for any 
$\alpha\in (0,\alpha_0]$ there exists constants $C$, $\ep_\alpha>0$ such that for all $\ep\in(0,\ep_\alpha]$, 
$E\in [0,\ep R_0]$ and real $u\in \De(H_\ep^\Sigma)$
\begin{equation}\label{weigequ2}
\left\|  e^{\frac{\Phi_\alpha}{\ep}} u \right\|_{L^2(\Sigma)}
\leq C \Bigl[ \ep^{-1}\left\| e^{\frac{\Phi_\alpha}{\ep}}
\left(H_\ep^{\Sigma}-E\right)u\right\|_{L^2(\Sigma)} +
 \| u \|_{L^2(\Sigma)}  \Bigr] \; .
\end{equation}
Moreover for any $\alpha\in (0, 1]$ there exists $\ep_\alpha>0$ such that for any $\ep\in (0,\ep_\alpha]$, $E\in [0,\ep R_0]$ and real 
$u\in\De(H_\ep^\Sigma)$
\begin{equation}\label{thm161}
\frac{1}{C'}  
\left\| \left(1+\tfrac{d}{\ep}\right)^{-\frac{B}{2}} e^{\frac{d}{\ep}}u\right\|^2_{L^2(K)} + 
 \left\| e^{\frac{(1-\alpha) d(x)}{\ep}}u\right\|^2_{L^2(\Sigma\setminus K)}\leq
\left\|  e^{\frac{\Phi_\alpha}{\ep}} u \right\|^2_{L^2(\Sigma)}
\end{equation}
and if $u$ is an eigenfunction of $H_\ep^\Sigma$ with respect to the eigenvalue $E\in[0,\ep R_0]$, then 
\begin{equation}\label{eigenu2}
\left\|  e^{\frac{\Phi_\alpha}{\ep}} u \right\|_{L^2(\Sigma)} \leq C \| u \|_{L^2(\Sigma)} \, ,
 \end{equation}
for $C$ given in \eqref{weigequ2}.
\een 
Analog results hold for $\hat{H}_\ep^\Sigma$ and for real $u\in\De (\hat{H}_\ep^\Sigma)$ respectively.\\
\end{theo}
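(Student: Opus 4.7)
The strategy is to reduce Theorem~\ref{Theorem2} to Theorem~\ref{weig} by replacing the rate function $d$, which is only Lipschitz on $\Sigma\setminus\O$, by a $\Ce^2$ function which satisfies a \emph{strict} eikonal inequality of the form $\tilde{t}_0(x,\nabla\phi)\leq V_0(x)-c$ on all of $\Sigma$, at the price of giving up a factor $(1-\alpha)$ in the exponent.

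For part (a) I would build such a function as follows. The map $\xi\mapsto\tilde{t}_0(x,\xi)=\int(\cosh(\gamma\cdot\xi)-1)K^{(0)}(x,d\gamma)$ is nonnegative, convex in $\xi$ and vanishes at $\xi=0$. By subhomogeneity $\tilde{t}_0(x,(1-\alpha)\nabla d)\leq(1-\alpha)\tilde{t}_0(x,\nabla d)\leq(1-\alpha)V_0(x)$ almost everywhere on $\Sigma$, so the Lipschitz function $(1-\alpha)d$ has strict slack $\alpha V_0$ away from $x_j$. Set $\phi_\alpha:=((1-\alpha)d)*\rho_\delta$ with $\rho_\delta$ a standard mollifier at scale $\delta=\delta(\alpha)\ll1$. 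The continuity hypothesis \eqref{K0stetig} is exactly what is needed to commute the evaluation of $\tilde{t}_0(\cdot,\nabla\phi_\alpha)$ with the convolution up to an $o(1)$ error as $\delta\to 0$, so the strict inequality $\tilde{t}_0(x,\nabla\phi_\alpha)\leq V_0(x)-c_\alpha$ persists on $\Sigma$ for $\delta$ sufficiently small. Moreover $\phi_\alpha\geq(1-\alpha)d-O(\delta)$, so $e^{\phi_\alpha/\ep}$ dominates $e^{(1-\alpha)d/\ep}$ up to an $\ep$-uniform constant. Feeding $\phi_\alpha$ into the proof machinery of Theorem~\ref{weig}, which only needs $\Ce^2$-regularity and strict sub-eikonality, yields \eqref{ab1}.

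For part (b), the idea is to piece together the \emph{sharp} logarithmically corrected weight from Theorem~\ref{weig} inside $K$ with the relaxed weight from (a) outside. Choose $D'<D<D''$ with $\{d\leq D''\}\subset\O$ and take a smooth cutoff $\chi$ equal to $1$ on $\{d\leq D'\}$ and supported in $\{d\leq D''\}$. Define
\[
\Phi^{\rm in}(x):=d(x)-\tfrac{B}{2}\,\ep\log\bigl(1+d(x)/\ep\bigr),\qquad \Phi^{\rm out}(x):=\phi_\alpha(x),
\]
and
\[
\Phi_\alpha(x):=\chi(x)\,\Phi^{\rm in}(x)+(1-\chi(x))\,\Phi^{\rm out}(x).
\]
On $K$ one has $\chi\equiv1$, giving $e^{\Phi_\alpha/\ep}=e^{d/\ep}(1+d/\ep)^{-B/2}$ and hence \eqref{abinK}; outside $\{d\leq D''\}$ one has $\chi\equiv0$, giving \eqref{abohneK} from the bounds on $\phi_\alpha$ established in (a). In the transition annulus $\{D'\leq d\leq D''\}$, $\nabla\Phi_\alpha$ is a convex combination of $\nabla\Phi^{\rm in}$ and $\nabla\Phi^{\rm out}$ plus the lower-order term $(\Phi^{\rm in}-\Phi^{\rm out})\nabla\chi$; using convexity of $\tilde{t}_0(x,\cdot)$ together with the $\alpha V_0$ slack of $\Phi^{\rm out}$ and the $O(\ep)$ logarithmic slack of $\Phi^{\rm in}$, one verifies that for $\alpha\leq\alpha_0$ small enough $\tilde{t}_0(x,\nabla\Phi_\alpha)\leq V_0(x)-c_\alpha$ throughout $\Sigma$. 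Invoking once more the Agmon conjugation argument from Theorem~\ref{weig} with $\Phi_\alpha$ in place of $d$ yields \eqref{weigequ2}. Then \eqref{thm161} follows by splitting $\|e^{\Phi_\alpha/\ep}u\|^2_{L^2(\Sigma)}$ over $K$ and $\Sigma\setminus K$ and applying the lower bounds \eqref{abinK}, \eqref{abohneK}, while \eqref{eigenu2} is immediate from \eqref{weigequ2} since $(H_\ep^\Sigma-E)u=0$ for an eigenfunction.

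The principal obstacle will be the transition annulus $\{D'\leq d\leq D''\}$: there $\Phi_\alpha$ genuinely mixes two distinct sub-eikonal functions, and the extra term $(\Phi^{\rm in}-\Phi^{\rm out})\nabla\chi$ in $\nabla\Phi_\alpha$ is of order $\alpha D$ rather than $\ep$, so the margin from the logarithmic correction alone is insufficient and the $\alpha V_0$ margin from $\Phi^{\rm out}$ must be spent to absorb it. Making this quantitative is what forces the choice of $\alpha_0\in(0,1]$ and determines the constants $C,\ep_\alpha$ appearing in the statement. A secondary but delicate point is that because our regularization of $d$ is only Lipschitz on the cut locus, the standard pointwise manipulations in the Agmon argument (commutators of $e^{\phi/\ep}$ with $H_\ep$) must be done at the level of the quadratic form $\E_\ep^\Sigma$, and the jump-kernel-integrated continuity hypothesis \eqref{K0stetig} is precisely what allows the mollification errors in $\TE_\ep^\Sigma$ to be absorbed into the coercive term.
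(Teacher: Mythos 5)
Your overall architecture matches the paper's: mollify $d$ where it is only Lipschitz, exploit convexity of $\xi\mapsto\tilde{t}_0(x,\xi)$ to create an $\alpha$-proportional slack, glue this with the logarithmically corrected weight inside $K$, and run the whole Agmon argument at the level of the Dirichlet form (Lemmata \ref{Hepconj}--\ref{HepDchi}). You also correctly identify \eqref{K0stetig} as the tool controlling the mollification error; in the paper this is Lemma \ref{tnullVnull}, which combines Jensen's inequality $\tilde{t}_0(x,\nabla d_\delta(x))\le\int\tilde{t}_0(x,\nabla d(x-y))\,j_\delta(y)\,dy$ with \eqref{K0stetig} to move the base point from $x$ to $x-y$ — note that $\nabla d_\delta$ does \emph{not} converge to $\nabla d$ uniformly, so it is convexity in $\xi$, not approximation of gradients, that saves the estimate; your phrase ``commute with the convolution'' silently uses this. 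However, there are two genuine gaps. First, the strict sub-eikonality $\tilde{t}_0(x,\nabla\phi_\alpha)\le V_0(x)-c_\alpha$ on all of $\Sigma$ that you want to feed into Theorem \ref{weig} is impossible: $V_0(x_j)=0$ while $\tilde{t}_0\ge 0$, and the slack $\alpha V_0(x)$ degenerates at $x_j$, so it cannot absorb an $x$-uniform $o_\delta(1)$ mollification error there. The Agmon machinery does not require strict slack everywhere; it requires slack $\gtrsim\ep$ outside $\{d<B\ep\}$, the exceptional set being handled by $F_-$. Accordingly the paper mollifies only where $d(x)\ge D+\eta$ (where $V_0\ge c>0$ absorbs the $o_\delta(1)$ error), keeps the exact $\Ce^2$ function $d$ near $x_j$, and uses $V_0(x)/2d(x)\ge C_0^{-1}$ (see \eqref{Vnulld}) to convert the $\tfrac{\alpha}{2}V_0$ slack into $\gtrsim \ep$ on $\{d\ge B\ep\}$. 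Your global mollification of $(1-\alpha)d$ would have to be localized in the same way.

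Second, and more seriously, your gluing with a generic spatial cutoff $\chi(x)$ does not close. You correctly flag the cross term $(\Phi^{\mathrm{in}}-\Phi^{\mathrm{out}})\nabla\chi$, of size $O(\alpha D\|\nabla\chi\|_\infty)$ and pointing in the direction of $\nabla\chi$, as the principal obstacle, but the proposed absorption into the $\alpha V_0$ margin fails: both the perturbation and the margin scale linearly in $\alpha$ with fixed geometric constants, so shrinking $\alpha$ gains nothing, and any convexity split $\tilde{t}_0(x,a+b)\le(1-\theta)\tilde{t}_0(x,\tfrac{a}{1-\theta})+\theta\,\tilde{t}_0(x,\tfrac{b}{\theta})$ either costs more than $O(\alpha)$ or blows up the argument $b/\theta$. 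The paper's resolution is structural rather than quantitative: the cutoffs are functions of $d$ itself ($\hat g=\hat\chi\circ d$, $\tilde g=\tilde\chi\circ d$), so in the first transition annulus every contribution to $\nabla\Phi_{\alpha,\delta}$ is a scalar multiple of $\nabla d$, the cross term becomes $h_\alpha(x)\nabla d$ with $h_\alpha\le 0$ once $\tfrac{B\ep}{2}\ln(d/\ep)\le\tfrac{\alpha}{4}d$ (see \eqref{thm2-4}), and one concludes $\nabla\Phi_{\alpha,\delta}=\lambda\nabla d$ with $0\le\lambda\le 1-t$, to which the subhomogeneity \eqref{convexun2} applies directly; the mollification transition is deferred to a \emph{second} annulus where the cross term is $\tilde\chi'(d)(d_\delta-d)\nabla d=o_\delta(1)\nabla d$ and hence harmless. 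Without this two-stage, level-set-adapted gluing (or some substitute that keeps the cross term either signed and parallel to $\nabla d$ or $o_\delta(1)$), the key positivity \eqref{Vnulltab} in the transition region is not established, and with it \eqref{weigequ2}.
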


\begin{rem}\label{rem1}
 All assertions of Theorem \ref{weig}  and \ref{Theorem2} remain true if $\E_\ep$ is not necessarily positive, but only satisfies $\E_\ep (x) 
 \geq - C\ep$ or, more special, $\E_\ep\geq 0$ but $V_\ep \geq -C\ep$.
In a stochastic context, such a situation could arise if e.g. one starts with a Dirichlet form $\tilde{\E}_\ep$ on
$L^2(m_\ep)$ associated with a pure jump process 
(with $V_\ep =0$), given by a kernel
$\tilde{K}_\ep(x, d\gamma)$, which is integrable with respect to $\gamma\in\R^d\setminus\{0\}$,  i.e. satisfies 
$\int \tilde{K}_\ep (x,d\gamma) <\infty$, and reversible with respect to $m_\ep(dx) = e^{-\frac{F(x)}{\ep}} dx$. If 
$K_\ep (x, d\gamma) := e^{\frac{F(x + \ep\gamma) - F(x)}{2\ep}}
\tilde{K}_\ep(x, d\gamma)$ is integrable with respect to $\gamma\in\R^d\setminus\{0\}$, then
\[ 
\E_\ep (u,v) := \tilde{\E}_\ep \bigl(e^{\frac{F}{2\ep}} u, e^{\frac{F}{2\ep}} v\bigr) = 
\int_{\R^d}\int_{\R^d\setminus\{0\}} (u(x+\ep\gamma) - u(x))(v(x+\ep\gamma) - v(x))\, K_\ep (x, d\gamma)\, dx + 
\skpR{u}{V_\ep v} \, ,
\]
is a Dirichlet form on $L^2(dx)$, where 
\[ 
V_\ep (x) =  \int_{\R^d\setminus\{0\}} \bigl(e^{-\frac{F(x + \ep\gamma) - F(x)}{2\ep}} - 1\bigr) K_\ep (x, d\gamma) = 
\int_{\R^d\setminus\{0\}} (\tilde{K}_\ep - K_\ep) (x, d\gamma) \; .
\]
If $F$ is smooth and $K_\ep$ and $\tilde{K}_\ep$ have an expansion as in Hypothesis \ref{Hypo2}(a), then one verifies that
$K^{(0)} (x, d\gamma) = K^{(0)}(x, -d\gamma)$ and $V_\ep \geq -C\ep$ for some constant $C>0$. If the integrability conditions for
$K_\ep$ and $\tilde{K}_\ep$ are not satisfied, the above transformation is more delicate and requires regularity of 
$K_\ep (x, d\gamma)$ in $x$.
\end{rem}
 We emphasize that the eigenvalue $E$ in Theorem \ref{weig}  and \ref{Theorem2} need not be discrete (a priori, it could be of infinite
 multiplicity or be imbedded into the continuous (or essential) spectrum of $H_\ep$). In this paper, $H_\ep$ need not have a
 spectral gap. 
 However, to develop tunneling theory in analogy to \cite{kleinro, kleinro2, kleinro3, kleinro4}, one needs to impose further conditions on
 the jump kernel $K_\ep$.

\section{Preliminary Results}

This section contains preparations for the proof of Theorem \ref{weig}  and \ref{Theorem2}. Lemmata \ref{Hepconj}
- \ref{HepDchi} contain our abstract approach to Agmon type estimates, while Lemmata \ref{propt} - \ref{tnullVnull} contain more specific 
estimates on $\tilde{t}_0(x, \xi), d(x)$ 
and the phasefunctions used in the proof of Theorem \ref{weig}  and \ref{Theorem2}.

\begin{Lem}\label{Hepconj}
Assume Hypotheses \ref{Hypo1} and \ref{Hypo2}  and,
for $\Sigma\subset\R^d$ open, let $\E_\ep^\Sigma$ and $\hat{\E}_\ep^\Sigma$ denote the associated Dirichlet forms given in Definition \ref{Def1} 
and \eqref{Neumann1} respectively.
Let $\varphi:\R^d \ra \R$ be Lipschitz and bounded.
Then for any real valued $v$ with $e^{\pm\frac{\varphi}{\ep}} v \in \De (\E^\Sigma_\ep)$ (or $ \De (\hat{\E}^\Sigma_\ep)$ resp.)
\begin{multline}\label{lemma2}
 \E_\ep^\Sigma \bigl(e^{-\frac{\varphi}{\ep}} v, \,e^{\frac{\varphi}{\ep}}v\bigr)=
\skpSi{\bigl(V_\ep + V_{\ep,\Sigma}^\varphi \bigr) v}{v}  \\
 + \frac{1}{2}\int_{\Sigma}\,dx \int_{\Sigma'(x)}\, K_\ep(x, d\gamma)
\cosh \Bigl(\tfrac{1}{\ep}\bigl(\varphi (x)-\varphi (x+\ep\gamma)\bigr)\Bigr)
\bigl(v(x)-v(x+\ep\gamma)\bigr)^2 \, ,
\end{multline}
where $\Sigma'(x)$ is defined in \eqref{Sigma'} and
\begin{equation}\label{Vphiep}
V_{\ep,\Sigma}^\varphi (x) :=
\int_{\Sigma'(x)}\left[ 1 -  \cosh \Bigl(\tfrac{1}{\ep}
\bigl(\varphi(x)- \varphi(x+\ep\gamma)\bigr)\Bigr)\right] K_\ep(x, d\gamma)\; ,
\end{equation}
which ist bounded uniformly in $\ep$. An analog result holds for $\hat{\E}_\ep^\Sigma$.
\end{Lem}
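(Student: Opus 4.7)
The potential part is immediate since $e^{-\varphi/\ep}v \cdot e^{\varphi/\ep}v = v^2$, giving the first summand $\skpSi{V_\ep v}{v}$. The core of the proof is therefore the identity on the kinetic part $\TE_\ep^\Sigma(e^{-\varphi/\ep}v, e^{\varphi/\ep}v)$. First I would carry out the pointwise algebraic expansion of
\[
(e^{-\varphi(x)/\ep}v(x) - e^{-\varphi(x+\ep\gamma)/\ep}v(x+\ep\gamma))(e^{\varphi(x)/\ep}v(x) - e^{\varphi(x+\ep\gamma)/\ep}v(x+\ep\gamma)) .
\]
The two exponential cross terms combine into $2\cosh\bigl(\tfrac{1}{\ep}(\varphi(x)-\varphi(x+\ep\gamma))\bigr)$, so writing $C_\ep(x,\gamma)$ for this cosh, the product equals
\[
v(x)^2 + v(x+\ep\gamma)^2 - 2 C_\ep(x,\gamma) v(x) v(x+\ep\gamma) = C_\ep(x,\gamma)(v(x)-v(x+\ep\gamma))^2 + (1 - C_\ep(x,\gamma))(v(x)^2 + v(x+\ep\gamma)^2).
\]
This is the central identity: it cleanly isolates the desired $\cosh$-weighted Dirichlet term from an error that contains only the squared values of $v$.

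Next I would integrate this against $\tfrac12 K_\ep(x,d\gamma)\, dx$ over $\{(x,\gamma):x\in\Sigma,\;\gamma\in\Sigma'(x)\}$. The first summand gives directly the stated $\cosh$-weighted integral. For the second summand the $v(x)^2$ piece contributes $\tfrac12 V_{\ep,\Sigma}^\varphi(x)v(x)^2$ by definition of $V_{\ep,\Sigma}^\varphi$. For the $v(x+\ep\gamma)^2$ piece I would apply the reversibility \eqref{revers2} in the change of variables $(x,\gamma)\mapsto(x+\ep\gamma,-\gamma)=:(y,\gamma')$. Observe that this map preserves the set $\{x\in\Sigma,\gamma\in\Sigma'(x)\}$ (since then $x+\ep\gamma\in\Sigma$ and $-\gamma\in\Sigma'(x+\ep\gamma)$), that $\varphi(x)-\varphi(x+\ep\gamma) = -(\varphi(y)-\varphi(y+\ep\gamma'))$ so $(1-C_\ep)$ is invariant (cosh is even), and that $v(x+\ep\gamma)^2 = v(y)^2$. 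Thus reversibility identifies this contribution with $\tfrac12\int_\Sigma V_{\ep,\Sigma}^\varphi(y)v(y)^2\,dy$ as well, and adding the two halves yields the single $\skpSi{V_{\ep,\Sigma}^\varphi v}{v}$ in \eqref{lemma2}.

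Finally, for the uniform bound on $V_{\ep,\Sigma}^\varphi$ I would split the $\gamma$-integral at $|\gamma|=1$. Since $\varphi$ is Lipschitz with constant $L$, $|(\varphi(x)-\varphi(x+\ep\gamma))/\ep|\leq L|\gamma|$ independently of $\ep$, so on $|\gamma|\leq 1$ one has $|1-\cosh|\leq \tfrac12 L^2|\gamma|^2 e^L$, controlled by Hypothesis \ref{Hypo1}(b)(ii); on $|\gamma|\geq 1$ one has $|1-\cosh|\leq e^{L|\gamma|}$, controlled by the exponential bounds of Hypothesis \ref{Hypo2}(a)(i) applied to $K^{(0)}$ and $R_\ep^{(1)}$ separately. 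The argument for $\hat{\E}_\ep^\Sigma$ is identical with $\Sigma'(x)$ replaced by $\R^d\setminus\{0\}$ and the full reversibility on $Y$ used directly.

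\textbf{Main obstacle.} The delicate point is the reversibility step under the asymmetric constraint $\gamma\in\Sigma'(x)$: the reversibility \eqref{revers} as stated holds for arbitrary $\phi,\psi\in\Ce_0(\R^d)$ on all of $Y$, so one must verify that the indicator $\id_\Sigma(x)\id_{\Sigma'(x)}(\gamma)$ is preserved under $(x,\gamma)\mapsto(x+\ep\gamma,-\gamma)$, which it is by the very definition of $\Sigma'(x)$. Once this symmetry of the domain is observed, the reversibility applies and the two halves combine without boundary contributions, at which point the identity \eqref{lemma2} and the bound on $V_{\ep,\Sigma}^\varphi$ follow as indicated.
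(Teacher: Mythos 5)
Your proposal matches the paper's proof essentially step for step: the same algebraic identity isolating the $\cosh$-weighted square from $(1-\cosh)\bigl(v(x)^2+v(x+\ep\gamma)^2\bigr)$, the same use of reversibility together with the evenness of $\cosh$ to turn the latter into $\skpSi{V_{\ep,\Sigma}^\varphi v}{v}$, and the same Lipschitz estimate combined with Hypothesis \ref{Hypo2}(a)(i) for the uniform bound. The only (inessential) discrepancy is your closing remark on the Neumann case: by the paper's convention the representation \eqref{ESigma} holds verbatim for $\hat{\E}_\ep^\Sigma$, so the $\gamma$-integration there is still over $\Sigma'(x)$ rather than $\R^d\setminus\{0\}$.
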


\begin{proof}
We have by \eqref{ESigma}
\begin{multline}\label{lemma2-1}
 \E_\ep^\Sigma \bigl(e^{-\frac{\varphi}{\ep}} v, \,e^{\frac{\varphi}{\ep}}v\bigr) - \skpSi{V_\ep v}{v}  \\
= \frac{1}{2}\int_\Sigma dx \int_{\Sigma'(x)} \Bigl( v(x)^2 - 2 \cosh \bigl(\tfrac{1}{\ep}(\varphi (x+\ep\gamma) - \varphi (x))\bigr) v(x) v(x+\ep\gamma) + v(x+\ep\gamma)^2\Bigr) K_\ep(x, \,d\gamma) \\
 = \frac{1}{2} \int_\Sigma dx \int_{\Sigma'(x)} \bigl( v(x)^2 + v(x + \ep\gamma)^2\bigr) 
\Bigl(1 -  \cosh \bigl(\tfrac{1}{\ep}(\varphi (x+\ep\gamma) - \varphi (x))\bigr) \Bigr) K_\ep(x, \,d\gamma) \\
 +  \frac{1}{2}\int_\Sigma dx \int_{\Sigma'(x)}  \cosh \bigl(\tfrac{1}{\ep}(\varphi (x + \ep\gamma) - \varphi (x))\bigr) \bigl( v(x) - 
 v(x+\ep\gamma)\bigr)^2 K_\ep(x, \,d\gamma) \; .
\end{multline}
Since $\cosh \xi$ is even with respect to $\xi$ and by the reversibility \eqref{revers} of $K_\ep (x, d\gamma)$
\begin{multline}\label{lemma2-3}
  \frac{1}{2} \int_\Sigma dx \int_{\Sigma'(x)} \bigl( v(x)^2 + v(x + \ep\gamma)^2\bigr) 
\Bigl(1 -  \cosh \bigl(\tfrac{1}{\ep}(\varphi (x+\ep\gamma) - \varphi (x))\bigr) \Bigr) K_\ep(x, \,d\gamma)  \\ = 
\int_\Sigma dx \int_{\Sigma'(x)}  v(x)^2 
\Bigl(1 -  \cosh \bigl(\tfrac{1}{\ep}(\varphi (x+\ep\gamma) - \varphi (x))\bigr) \Bigr) K_\ep(x, \,d\gamma) \; .
\end{multline}
Thus inserting \eqref{lemma2-3} into \eqref{lemma2-1} and using the definition of $V_{\ep, \Sigma}^\varphi$ gives \eqref{lemma2}.

To show boundedness of the integral on the right hand side of \eqref{Vphiep}, one observes that $\cosh t - 1 \leq |t| \sinh |t|$ for
all $t\in\R$. Choosing $t= \frac{1}{\ep}(\varphi (x) - \varphi (x+\ep\gamma))$ and using that $\varphi$ is Lipschitz with Lipschitz constant $L>0$ gives 
\begin{equation}\label{lemma2-4}
 \cosh \Bigl(\tfrac{1}{\ep}
\bigl(\varphi(x)- \varphi(x+\ep\gamma)\bigr)\Bigr) - 1 \leq  L^2 |\gamma|^2 \frac{\sinh (L|\gamma|)}{L|\gamma|}\; .
\end{equation}
Inserting \eqref{lemma2-4} into \eqref{Vphiep} proves the assertion, according to 
Hypothesis \ref{Hypo2},(a),(i).

Since the formula \eqref{ESigma} also holds for $\hat{\E}_\ep^\Sigma$, the same arguments give the analog result.\\ 
\end{proof}

\begin{Lem}\label{lemma3}
 Assume Hypotheses \ref{Hypo1} and \ref{Hypo2} and for $\Sigma\subset\R^d$ open, let $\E_\ep^\Sigma$, $\hat{\E}_\ep^\Sigma$ and
$\varphi$ be as in Lemma \ref{Hepconj}. Then 
\[  v \in \De (\E^\Sigma_\ep)\; \text{or}\;  \De (\hat{\E}^\Sigma_\ep)\;\text{resp.}  \quad\Rightarrow \quad
e^{\frac{\varphi}{\ep}} v \in \De (\E^\Sigma_\ep)\;\text{or}\;  \De (\hat{\E}^\Sigma_\ep)\;\text{resp.} \; . \]
\end{Lem}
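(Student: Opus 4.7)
Since $\varphi$ is bounded, multiplication by $e^{\varphi/\ep}$ is a bounded operator on $L^2(\Sigma)$, so the nontrivial task is to show that $\E_\ep^\Sigma[e^{\varphi/\ep}v]<\infty$ and, moreover, that $e^{\varphi/\ep}v$ lies in the form-norm closure of $\Ce_0^\infty(\Sigma)$ (the closure issue being automatic for $\hat{\E}_\ep^\Sigma$, whose domain is defined by finiteness of the form). Set $a(x):=e^{\varphi(x)/\ep}$, so that $\|a\|_\infty \leq M:=e^{\|\varphi\|_\infty/\ep}$ and, since $\varphi$ has Lipschitz constant $L$, the mean value theorem gives $|a(x)-a(x+\ep\gamma)|\leq ML|\gamma|$.

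The key algebraic device is the elementary inequality
\[
(ab-cd)^2 \leq 2a^2(b-d)^2 + 2d^2(a-c)^2,
\]
applied pointwise to $(a(x)v(x)-a(x+\ep\gamma)v(x+\ep\gamma))^2$. After integrating against $K_\ep(x,d\gamma)\,dx$ on $\Sigma\times\Sigma'(x)$, the first summand contributes at most $4M^2\TE_\ep^\Sigma[v]$, which is finite since $v\in\De(\E_\ep^\Sigma)$. Using the Lipschitz bound on $a$, the second summand is controlled by
\[
2M^2L^2\int_\Sigma dx \int_{\Sigma'(x)} |\gamma|^2 v(x+\ep\gamma)^2 K_\ep(x,d\gamma).
\]
Invoking reversibility \eqref{revers2} (the measure $K_\ep(x,d\gamma)\,dx$ is invariant under $(x,\gamma)\mapsto(x+\ep\gamma,-\gamma)$, and the constraint $x,x+\ep\gamma\in\Sigma$ is preserved by this involution) swaps $v(x+\ep\gamma)^2$ into $v(x)^2$ and produces an upper bound
\[
2M^2L^2 \int_\Sigma v(x)^2 \Bigl(\int_{\Sigma'(x)} |\gamma|^2 K_\ep(x,d\gamma)\Bigr) dx \leq CM^2L^2\|v\|_{L^2(\Sigma)}^2,
\]
where the uniform bound on $\int |\gamma|^2 K_\ep(x,d\gamma)$ is obtained by splitting $|\gamma|\leq 1$ (Hypothesis \ref{Hypo1}(b)(ii)) from $|\gamma|>1$ (Hypothesis \ref{Hypo2}(a)(i), using $|\gamma|^2\leq e^{|\gamma|}$). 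The potential contribution is harmless because $V_\ep\geq 0$ and $e^{2\varphi/\ep}\leq M^2$.

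To upgrade finiteness to membership in $\De(\E_\ep^\Sigma)$ for the Dirichlet case, choose $v_n\in\Ce_0^\infty(\Sigma)$ with $v_n\to v$ in form norm; applying the above estimates to $v_n-v_m$ shows that $u_n:=e^{\varphi/\ep}v_n$ is Cauchy in form norm with $L^2$-limit $e^{\varphi/\ep}v$. Each $u_n$ is Lipschitz with compact support strictly inside $\Sigma$, and standard mollification yields $\Ce_0^\infty(\Sigma)$-approximants whose form-norm convergence is verified by dominated convergence, again using the uniform $|\gamma|^2$-integrability of $K_\ep$. A diagonal extraction then delivers the required $\Ce_0^\infty(\Sigma)$-sequence converging in form norm to $e^{\varphi/\ep}v$.

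The main obstacle I expect is the bookkeeping in this closure step: the energy bound itself follows directly from the algebraic inequality together with reversibility and the $|\gamma|^2$-integrability of $K_\ep$, but producing an explicit approximating sequence in $\Ce_0^\infty(\Sigma)$ needs the two-stage approach (approximate $v$ by $v_n$, then mollify each $e^{\varphi/\ep}v_n$) and a careful verification that mollification converges in the nonlocal form norm.
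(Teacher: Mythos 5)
Your proof is correct and follows the same overall architecture as the paper's (a quantitative bound $\te_\ep^\Sigma[e^{\varphi/\ep}v]\leq e^{\tilde C/\ep}\,\te_\ep^\Sigma[v]$, then a mollification argument to settle membership in the Dirichlet domain), but it differs in two details worth noting. First, your algebraic split $(ab-cd)^2\leq 2a^2(b-d)^2+2d^2(a-c)^2$ leaves $v(x+\ep\gamma)^2$ in the second term, which you then convert to $v(x)^2$ via the reversibility \eqref{revers}; the paper instead adds and subtracts $e^{\varphi(x+\ep\gamma)/\ep}v(x)$, so its second term $B[v]$ already carries $v(x)^2$ and the proof of this lemma never invokes reversibility. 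Since reversibility is a standing hypothesis and you correctly observe that the involution $(x,\gamma)\mapsto(x+\ep\gamma,-\gamma)$ preserves the constraint $x,x+\ep\gamma\in\Sigma$, your route is legitimate, just slightly less economical. Second, in the closure step you mollify the Lipschitz, compactly supported products $e^{\varphi/\ep}v_n$ directly, whereas the paper mollifies only the phase, setting $\varphi_\delta=\varphi*j_\delta$ so that $e^{\varphi_\delta/\ep}v_n$ is already in $\Ce_0^\infty(\Sigma)$, and then checks $\te_\ep^\Sigma[(e^{\varphi_\delta/\ep}-e^{\varphi/\ep})v_n]\to 0$ by dominated convergence. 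Both work: your dominating function for the nonlocal form of $u_n*j_\delta-u_n$ comes from the uniform Lipschitz constant and uniform sup-norm decay of the mollification error, exactly parallel to the paper's use of \eqref{lemma3-13} for $\varphi$ and $\varphi_\delta$ simultaneously; the paper's variant merely avoids having to track the Lipschitz constant of the product. Your Lipschitz bound $|a(x)-a(x+\ep\gamma)|\leq ML|\gamma|$ for $a=e^{\varphi/\ep}$ is also fine (and marginally sharper than the paper's $e^{C/\ep}L|\gamma|e^{L|\gamma|}$, which costs nothing since the tail integrability in Hypothesis \ref{Hypo2}(a)(i) absorbs the extra exponential anyway).
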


\begin{proof}
We will use the notation (see \eqref{ESigma})
\begin{equation}\label{lemma3-5}
 \te^\Sigma_\ep[u] := \hat{\E}^\Sigma_\ep [u] + \|u\|^2_{L^2(\Sigma)} = \hat{\TE}_\ep^\Sigma[u] + \hat{\VE}_\ep^\Sigma[u] +
\|u\|^2_{L^2(\Sigma)}\; .  
\end{equation}
We recall that a function $f\in \De (\hat{\E}^\Sigma_\ep)$ is in $\De (\E^\Sigma_\ep)$, if and only if there is a sequence $(f_n)_{n\in\N}$
in $\De (\tilde{\E}^\Sigma_\ep)$ such that $ \te^\Sigma_\ep [f_n - f]\rightarrow 0$ as $n\to\infty$.

We notice that for some $C,L>0$
\begin{equation}\label{phibeschLip}
\|\varphi \|_\infty \leq C \quad\text{and}\quad |\varphi (x) - \varphi (y)| \leq L |x-y|\, , \quad x,y\in\R^d\; .
\end{equation}

{\sl Step 1:} 

Let $v \in \De (\hat{\E}^\Sigma_\ep)$, then we shall show that for some $\tilde{C}>0$ uniformly with respect to $\ep\in (0, \ep_0]$
\begin{equation}\label{lemma3-4}
\te_\ep^\Sigma [e^{\frac{\varphi}{\ep}} v]  \leq e^{\frac{\tilde{C}}{\ep}}  \te^\Sigma_{\ep} [v]\; .
\end{equation}
By \eqref{Neumann1}, this implies $e^{\frac{\varphi}{\ep}} v \in \De (\hat{\E}^\Sigma_\ep)$.

From \eqref{phibeschLip} it follows at once that 
\begin{equation}\label{lemma3-8}
 \| e^{\frac{\varphi}{\ep}} v \|^2_{L^2(\Sigma)} \leq e^{\frac{2C}{\ep}} \|v\|^2_{L^2(\Sigma)}\; .
\end{equation}
Using the definition \eqref{VSigma} of $\hat{\VE}_\ep^\Sigma$, we have by \eqref{phibeschLip}
\begin{equation}\label{lemma3-9}
 \hat{\VE}_\ep^\Sigma [e^{\frac{\varphi}{\ep}} v] \leq e^{\frac{2C}{\ep}} \hat{\VE}_\ep^\Sigma [v] \; .
\end{equation}
It remains to analyze 
\begin{equation}\label{lemma3-10}
  \hat{\TE}_\ep^\Sigma [e^{\frac{\varphi}{\ep}} v] = \frac{1}{2} \int_\Sigma dx \int_{\Sigma'(x)} 
\Bigl(e^{\frac{\varphi(x + \ep\gamma)}{\ep}} v(x + \ep\gamma) - e^{\frac{\varphi (x)}{\ep}} v(x)\Bigr)^2 K_\ep(x, d\gamma)\; .
\end{equation}
Adding $f-f$ for $f= e^{\frac{\varphi(x+ \ep\gamma)}{\ep}} v(x)$ inside the brackets on rhs\eqref{lemma3-10} and then using $(a + b)^2 \leq 2 (a^2 + b^2)$, we get
\begin{align}\label{lemma3-3}
\text{rhs}\eqref{lemma3-10} &\leq A[v] + B[v] \qquad \text{where} \\
A[v] &:= \int_\Sigma dx \int_{\Sigma'(x)} 
e^{2\frac{\varphi(x + \ep\gamma)}{\ep}}\bigl(v(x + \ep\gamma) - v(x)\bigr)^2 K_\ep(x, d\gamma) \nonumber\\
B[v] &:=  \int_\Sigma dx \int_{\Sigma'(x)} 
v(x)^2 \bigl( e^{\frac{\varphi (x + \ep\gamma)}{\ep}} - e^{\frac{\varphi (x)}{\ep}}\bigr)^2 K_\ep(x, d\gamma)\; . \label{lemma3-3.2}
\end{align}
By \eqref{phibeschLip} we have 
\begin{equation}\label{lemma3-11}
 A[v] \leq e^{\frac{2C}{\ep}} \hat{\TE}_\ep^\Sigma [v] \; .
\end{equation}
To estimate $B[v]$, observe that
\begin{equation}\label{lemma3-12}
 | 1 - e^t| \leq e^{|t|} - 1 \, , \qquad (t\in\R)\; ,
\end{equation}
which by \eqref{phibeschLip} leads to
\begin{equation}\label{lemma3-13}
 \Bigl| e^{\frac{\varphi (x + \ep\gamma)}{\ep}} - e^{\frac{\varphi (x)}{\ep}}\Bigr| \leq 
e^{\frac{\varphi (x + \ep\gamma)}{\ep}} \Bigl( e^{\frac{1}{\ep}|\varphi (x) - \varphi (x + \ep\gamma) |} - 1\Bigr) \leq
e^{\frac{C}{\ep}} L |\gamma| e^{L |\gamma|} \; .
\end{equation}
Substituting \eqref{lemma3-13} into \eqref{lemma3-3.2} gives by Hypothesis \ref{Hypo2}(a)(i)
\begin{equation}\label{lemma3-14}
 B[v] \leq e^{\frac{2C}{\ep}} L^2 \int_\Sigma dx |v(x)|^2 \int_{\Sigma'(x)} |\gamma|^2 e^{2L|\gamma|} K_\ep (x, d\gamma) \leq 
e^{\frac{\tilde{C}}{\ep}} \|v\|^2_{L^2(\Sigma)} \; ,
\end{equation}
where $\tilde{C}$ is uniform with respect to $\ep\in (0,\ep_0]$. Inserting \eqref{lemma3-14} and
\eqref{lemma3-11} into \eqref{lemma3-3} and the result in \eqref{lemma3-10}, and combining  \eqref{lemma3-10}, \eqref{lemma3-9} and \eqref{lemma3-8} proves \eqref{lemma3-4}.\\

{\sl Step 2:} 

We prove 
\begin{equation}\label{lemma3-22} 
e^{\frac{\varphi}{\ep}} v \in \De (\E_\ep^\Sigma)\quad\text{for} \quad v\in\Ce_0^\infty (\Sigma) \subset\De (\E_\ep^\Sigma)\, . 
\end{equation}

Let $j\in\Ce_0^\infty (\R^d)$ be non-negative with $\int_{\R^d}j(x)\, dx = 1$. For $\delta>0$ we set
$j_\delta (x) := \delta^{-d}j(\frac{x}{\delta})$ and $\varphi_\delta := \varphi * j_\delta$, then $\varphi_\delta \in\Ce^\infty(\R^d)$ and 
$e^{\frac{\varphi_\delta}{\ep}} v \in \Ce_0^\infty (\Sigma)\subset \De (\E_\ep^\Sigma)$. Moreover 
\begin{equation}\label{lemma3-15}
 \| \varphi_\delta \|_\infty \leq \|\varphi \|_\infty\leq C \; , \quad \quad 
\bigl\|\varphi_\delta - \varphi\bigr\| \longrightarrow 0 \quad \text{as}\quad \delta \to 0\; 
\end{equation}
and $\varphi_\delta$ has the same Lipschitz constant $L$ as $\varphi$ (see \eqref{phibeschLip}), since
\begin{equation}\label{Lipkonst} 
|\varphi_\delta (x) - \varphi_\delta (y)| = \Bigl| \int_{\R^d} \bigl(\varphi (x-z) - \varphi (y-z)\bigr) j_\delta (z)\, dz\Bigr| 
\leq L |x-y| \; . 
\end{equation}
Assume $v\in\Ce_0^\infty (\Sigma)$, then by Step 1, $e^{\frac{\varphi}{\ep}} v \in \De (\hat{\E}_\ep^\Sigma)$. Thus it suffices to show that
\begin{equation}\label{lemma3-16}
 \te_\ep^\Sigma \Bigl[ \Bigl(e^{\frac{\varphi_\delta}{\ep}} - e^{\frac{\varphi}{\ep}}\Bigr) v\Bigr] \longrightarrow 0 \quad\text{as}
\quad \delta \to 0\; .
\end{equation}
By dominated convergence, using \eqref{lemma3-15},
\begin{equation}\label{lemma3-17}
 \Bigl\| \Bigl(e^{\frac{\varphi_\delta}{\ep}} - e^{\frac{\varphi}{\ep}}\Bigr) v \Bigr\|_{L^2(\Sigma)} 
\longrightarrow 0\qquad\text{and}\qquad 
 \hat{\VE}_\ep^\Sigma \Bigl[ \Bigl(e^{\frac{\varphi_\delta}{\ep}} - e^{\frac{\varphi}{\ep}}\Bigr) v\Bigr]\longrightarrow 0 \, , 
\quad (\delta\to 0)\; .
\end{equation}
To analyze $\hat{\TE}_\ep^\Sigma$, we set $\Phi_\delta:= e^{\frac{\varphi_\delta}{\ep}} - e^{\frac{\varphi}{\ep}}$, then
\begin{align}\label{lemma3-19}
\hat{\TE}_\ep^\Sigma & \Bigl[ \Bigl(e^{\frac{\varphi_\delta}{\ep}} - e^{\frac{\varphi}{\ep}}\Bigr) v\Bigr] = A'[v] + B'[v]\, ,
 \quad\text{where}\\
A'[v]&:= \int_\Sigma dx \int_{\Sigma'(x)} 
\Phi_\delta^2(x + \ep\gamma) \bigl(v(x + \ep\gamma) - v(x)\bigr)^2 K_\ep(x, d\gamma)\nonumber \\
B'[v] &:=  \int_\Sigma dx \int_{\Sigma'(x)} 
v(x)^2 \bigl( \Phi_\delta(x + \ep\gamma) - \Phi_\delta (x)\bigr)^2 K_\ep(x, d\gamma)\; . \nonumber
\end{align}
Since $\| \Phi_\delta\|_\infty \leq e^{\frac{C'}{\ep}}$ by \eqref{lemma3-15} uniformly with respect to $\delta>0$, $e^{\frac{2C'}{\ep}}(v(x+\ep\gamma) - v(x))^2$ is
a dominating function for the integrand of $A'[v]$, which is in $L^1(d\mu)$ for the measure $d\mu = K_\ep(x, d\gamma)dx$
in $\Sigma\times \R^d\setminus\{0\}$. Thus by the dominated convergence theorem 
\begin{equation}\label{lemma3-20} 
A'[v]\rightarrow 0\; , \qquad (\delta \to 0)
\end{equation}
 because $\|\Phi_\delta \|_\infty \rightarrow 0$  as $\delta \to 0$, by \eqref{lemma3-15}.
Similarly,
\begin{equation}\label{lemma3-21}
B'[v]\rightarrow 0\, , \qquad (\delta \to 0)
\end{equation} 
by the dominated convergence theorem (observe that using \eqref{lemma3-13} for $\varphi$ and $\varphi_\delta$, uniformly with respect to $\delta$ in view of \eqref{lemma3-15} and \eqref{Lipkonst} one finds
\[ |\Phi_\delta (x+\ep\gamma) - \Phi_\delta (x)| \leq 
\Bigl|e^{\frac{\varphi_\delta(x + \ep\gamma)}{\ep}} - e^{\frac{\varphi_\delta (x)}{\ep}}\Bigr| + \Bigl|e^{\frac{\varphi(x+\ep\gamma)}{\ep}} - e^{\frac{\varphi(x)}{\ep}}\Bigr| \leq 2 e^{\frac{C}{\ep} + L|\gamma|} L |\gamma|\, , \]
which gives a dominating function for the integrand of $B'[v]$, which in view of Hypothesis \ref{Hypo2}(a) is integrable with
respect to $d\mu$). Inserting
\eqref{lemma3-21} and \eqref{lemma3-20} into \eqref{lemma3-19} and combining the result with \eqref{lemma3-17} proves \eqref{lemma3-16} and \eqref{lemma3-22}.\\

{\sl Step 3:}

Assume $v\in\De(\E_\ep^\Sigma)$, then by Definition \ref{Def1}, there are $v_n\in\Ce_0^\infty(\Sigma)$ with 
$\te_\ep^\Sigma[v_n- v] \to 0$ as
$n\to \infty$. By Step 2, for all $n\in\N$, $e^{\frac{\varphi}{\ep}}v_n\in\De (\E_\ep^\Sigma)$, and 
\[ \te_\ep^\Sigma \bigl[ e^{\frac{\varphi}{\ep}}( v_n - v) \bigr] \rightarrow 0\, , \qquad (n\to\infty) \]
by \eqref{lemma3-4}, proving $e^{\frac{\varphi}{\ep}}v \in \De(\E_\ep^\Sigma)$.\\
\end{proof}

We will use Lemma \ref{Hepconj} and Lemma \ref{lemma3} to prove the following norm
estimate, which is a main ingredient in the proof of Theorem \ref{weig}.\\

\begin{Lem}\label{HepDchi}
Assume Hypotheses \ref{Hypo1}, \ref{Hypo2} and,
for $\Sigma\subset\R^d$ open, let $H_\ep^\Sigma$ ($\hat{H}_\ep^\Sigma$) denote the operator with Dirichlet (Neumann) boundary conditions introduced in Definition \ref{Def1}. Let $\varphi:\Sigma \ra \R$ be Lipschitz and bounded.
For $E\geq 0$ fixed, let $F_\pm : \Sigma \rightarrow [0,\infty)$ be a pair of functions such that
$F(x) := F_+(x) + F_-(x) > 0 $ and
\begin{equation}\label{F+F-bedingung}
F_+^2(x) - F_-^2(x) = V_\ep(x) + V^\varphi_{\ep,\Sigma}(x) - E\; , \qquad x\in\Sigma \; ,
\end{equation}
where $V^\varphi_{\ep,\Sigma}(x)$ is given in \eqref{Vphiep}.
Then for $u\in \De (H^\Sigma_\ep)$ (or $\De (\hat{H}_\ep^\Sigma)$) real-valued with
$Fe^{\frac{\varphi}{\ep}} u \in L^2 (\Sigma)$, we have for some $C>0$
\begin{equation}\label{FmitF-}
\| Fe^{\frac{\varphi}{\ep}} u\|^2_{L^2(\Sigma)} \leq 4 \left\| \tfrac{1}{F}e^{\frac{\varphi}{\ep}}(H_\ep^\Sigma - E)
u\right\|^2_{L^2(\Sigma)} + 8 \|F_- e^{\frac{\varphi}{\ep}}u\|^2_{L^2(\Sigma)}  \, .
\end{equation}
\end{Lem}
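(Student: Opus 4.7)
The plan is to express the bilinear form $\E_\ep^\Sigma(u, e^{2\varphi/\ep} u)$ in two different ways and then compare. On one hand, since $\E_\ep^\Sigma$ is the closed symmetric form associated with the self-adjoint operator $H_\ep^\Sigma$, the First Representation Theorem gives
\[
\E_\ep^\Sigma(u, e^{2\varphi/\ep} u) = \langle H_\ep^\Sigma u,\, e^{2\varphi/\ep} u\rangle_{L^2(\Sigma)},
\]
provided the test function $e^{2\varphi/\ep} u$ lies in $\De(\E_\ep^\Sigma)$. That is exactly what Lemma \ref{lemma3} provides, since $2\varphi$ is bounded and Lipschitz and $u\in\De(H_\ep^\Sigma)\subset\De(\E_\ep^\Sigma)$. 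On the other hand, setting $v := e^{\varphi/\ep} u$ so that $e^{-\varphi/\ep}v = u$ and $e^{\varphi/\ep}v = e^{2\varphi/\ep} u$, Lemma \ref{Hepconj} rewrites the same form as $\langle (V_\ep + V^\varphi_{\ep,\Sigma}) v, v\rangle_{L^2(\Sigma)}$ plus the non-negative $\cosh$-integral appearing in \eqref{lemma2}. Discarding the latter yields the lower bound $\E_\ep^\Sigma(u, e^{2\varphi/\ep}u) \geq \langle (V_\ep + V^\varphi_{\ep,\Sigma}) v, v\rangle_{L^2(\Sigma)}$.

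Equating the two representations, writing $H_\ep^\Sigma u = (H_\ep^\Sigma - E)u + Eu$ to cancel the $E\|v\|^2$ term that arises from $\langle u, e^{2\varphi/\ep}u\rangle = \|v\|^2$, and substituting the hypothesis $V_\ep + V_{\ep,\Sigma}^\varphi - E = F_+^2 - F_-^2$ from \eqref{F+F-bedingung}, I obtain
\[
\langle (H_\ep^\Sigma - E)u,\, e^{2\varphi/\ep} u\rangle_{L^2(\Sigma)} \,\geq\, \|F_+ v\|_{L^2(\Sigma)}^2 - \|F_- v\|_{L^2(\Sigma)}^2.
\]
Using the elementary inequality $F^2 = (F_+ + F_-)^2 \leq 2F_+^2 + 2F_-^2$, this rearranges to
\[
\|F v\|_{L^2(\Sigma)}^2 \,\leq\, 2\,\langle (H_\ep^\Sigma - E)u,\, e^{2\varphi/\ep}u\rangle_{L^2(\Sigma)} + 4\|F_- v\|_{L^2(\Sigma)}^2.
\]

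The mixed scalar product on the right is then handled by factoring the integrand as $\bigl(\tfrac{1}{F} e^{\varphi/\ep}(H_\ep^\Sigma - E)u\bigr)\cdot (F v)$, which is legitimate since $F>0$, and applying Cauchy–Schwarz followed by the weighted Young inequality $2ab \leq 2a^2 + \tfrac{1}{2} b^2$ with $a = \|\tfrac{1}{F} e^{\varphi/\ep}(H_\ep^\Sigma - E) u\|_{L^2(\Sigma)}$ and $b = \|F v\|_{L^2(\Sigma)}$. The resulting $\tfrac{1}{2}\|F v\|^2$ term is absorbed into the left-hand side, producing the desired estimate \eqref{FmitF-} with the constants $4$ and $8$. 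One may assume without loss of generality that both norms on the right-hand side are finite, otherwise the inequality is vacuous.

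The only substantive step is the justification that $e^{\varphi/\ep} u$ and $e^{2\varphi/\ep} u$ genuinely lie in the form domain, so that the First Representation Theorem and Lemma \ref{Hepconj} may be invoked unambiguously; this is precisely the content of Lemma \ref{lemma3}. Once domain issues are settled, the remainder is a direct algebraic manipulation.
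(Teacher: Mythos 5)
Your proposal is correct and follows essentially the same route as the paper: the identity of Lemma \ref{Hepconj} with $v=e^{\varphi/\ep}u$ (justified via Lemma \ref{lemma3}), discarding the non-negative $\cosh$-term, substituting \eqref{F+F-bedingung}, and then Cauchy--Schwarz plus the weighted Young inequality with absorption, yielding the constants $4$ and $8$. No gaps.
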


\begin{proof}
First observe that for $v:= e^{\frac{\varphi}{\ep}} u$ 
\begin{equation}\label{FF-F+}
\| Fv\|^2_{L^2(\Sigma)} \leq 2\left( \|F_+v\|^2_{L^2(\Sigma)} + \| F_-v\|^2_{L^2(\Sigma)} \right)  =
2\left( \|F_+v\|^2_{L^2(\Sigma)} - \| F_-v\|^2_{L^2(\Sigma)} \right) + 4 \| F_-v\|^2_{L^2(\Sigma)}\; .
\end{equation}
By \eqref{F+F-bedingung} one has
\begin{equation}\label{eikoF}
  \| F_+v\|^2_{L^2(\Sigma)} - \| F_-v\|^2_{L^2(\Sigma)} =
 \skpSi{(V_\ep + V_{\ep,\Sigma}^\varphi - E)v}{v} \, .
\end{equation}
Since $v\in \De (\E_\ep^\Sigma)$ (or $\De(\hat{\E}_\ep^\Sigma)$) by Lemma \ref{lemma3}, it follows at once 
from Lemma \ref{Hepconj} that
\begin{equation}\label{HepundVvarphi}
\skpSi{(V_\ep + V_{\ep,\Sigma}^\varphi - E)v}{v} \leq
\E_\ep^\Sigma\Bigl(e^{-\frac{\varphi}{\ep}} v, e^{\frac{\varphi}{\ep}}v \Bigr) - E\|v\|_{L^2(\Sigma)}^2
 \, .
\end{equation}
\eqref{eikoF} and \eqref{HepundVvarphi} yield by use of the Cauchy-Schwarz inequality, since $u\in\De (H_\ep^\Sigma)$,
\begin{eqnarray}\label{schwarzbinom}
2\left( \|F_+v\|^2_{L^2(\Sigma)} - \| F_-v\|^2_{L^2(\Sigma)} \right) &\leq& 2
\skpSi{\left( e^{\frac{\varphi}{\ep}}(H^\Sigma_\ep - E)\right) u}{v}
 \\
&\leq& 2\sqrt{2}
\left\|\tfrac{1}{F}\left( e^{\frac{\varphi}{\ep}}(H^\Sigma_\ep - E)\right) u
\right\|_{L^2(\Sigma)}
\frac{1}{\sqrt{2}}\| Fv\|_{L^2(\Sigma)} \nonumber\\
&\leq&  2 \left\|\tfrac{1}{F}\left( e^{\frac{\varphi}{\ep}}(H^\Sigma_\ep - E)
\right) u\right\|^2_{L^2(\Sigma)} +\frac{1}{2} \| Fv\|^2_{L^2(\Sigma)}\, .\nonumber
\end{eqnarray}
Inserting \eqref{schwarzbinom} into \eqref{FF-F+} we get
\[
\| Fv\|^2_{L^2(\Sigma)} \leq  2 \left\|\tfrac{1}{F}
\left( e^{\frac{\varphi}{\ep}}(H^\Sigma_\ep - E)
\right) u\right\|^2_{L^2(\Sigma)} +\frac{1}{2} \| Fv\|^2_{L^2(\Sigma)}  +
4 \| F_-v\|^2_{L^2(\Sigma)} \; , \]
which by  definition of $v$ gives \eqref{FmitF-}.\\
\end{proof}

\begin{Lem}\label{propt}
Assume Hypotheses \ref{Hypo1} and \ref{Hypo2}. 
\ben
\item For any $x\in\R^d$, the 
function $L_x: \R^d\ni\xi\mapsto \tilde{t}_0(x, \xi)$ is even and hyperconvex, i.e. $D^2L_x|_{\xi_0} \geq \alpha >0$, uniformly  in $\xi_0$.
\item At $\xi=0$, for fixed $x\in\R^d$,  the function $\tilde{t}_0$ has an expansion
\begin{equation}\label{kinen}
0 \leq \tilde{t}_0(x,\xi) - \skp{\xi}{B(x)\xi} = O\left(|\xi|^4\right)\qquad\text{as}\;\; |\xi|\to 0\, ,
\end{equation}
where $B:\R^d\rightarrow\mathcal{M}(d\times d,\R)$ is positive
definite, symmetric and bounded.
\een
\end{Lem}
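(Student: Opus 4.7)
The plan is to exploit the explicit integral representation
\[
\tilde{t}_0(x,\xi) = \int_{\R^d\setminus\{0\}} \bigl(\cosh(\gamma\cdot\xi) - 1\bigr)\, K^{(0)}(x,d\gamma)
\]
and differentiate under the integral sign, which is legitimized by the exponential integrability in Hypothesis \ref{Hypo2}(a)(i). Both parts of the lemma reduce to bookkeeping on Taylor expansions of $\cosh$, combined with the lower bound of Hypothesis \ref{Hypo2}(a)(ii).

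For part (a), evenness of $L_x$ is immediate since $\cosh$ is even. For hyperconvexity I would show that differentiation in $\xi$ passes under the integral (using that $|\gamma_i\gamma_j|\cosh(\gamma\cdot\xi_0)$ is dominated uniformly for $\xi_0$ in a bounded set by $C|\gamma|^2 e^{c|\gamma|}$, which is $K^{(0)}$-integrable by splitting at $|\gamma|=1$ and applying Hypothesis \ref{Hypo2}(a)(i)), so that
\[
\langle v, D^2 L_x|_{\xi_0} v\rangle = \int_{\R^d\setminus\{0\}} (\gamma\cdot v)^2 \cosh(\gamma\cdot\xi_0)\, K^{(0)}(x,d\gamma) \geq \int_{\R^d\setminus\{0\}} (\gamma\cdot v)^2 K^{(0)}(x,d\gamma) \geq c_x\|v\|^2,
\]
using $\cosh \geq 1$ and Hypothesis \ref{Hypo2}(a)(ii). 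Since $c_x$ is independent of $\xi_0$, we obtain the required uniformity in $\xi_0$.

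For part (b), I would define $B(x)$ as the quadratic term of the Taylor series at $\xi=0$, namely
\[
\langle \xi, B(x)\xi\rangle := \frac{1}{2}\int_{\R^d\setminus\{0\}} (\gamma\cdot\xi)^2\, K^{(0)}(x,d\gamma), \qquad B_{ij}(x) = \frac{1}{2}\int_{\R^d\setminus\{0\}} \gamma_i\gamma_j\, K^{(0)}(x,d\gamma).
\]
Symmetry is manifest; positive definiteness is again the statement of Hypothesis \ref{Hypo2}(a)(ii); boundedness of $|B_{ij}(x)|$ follows from splitting at $|\gamma|=1$ and using the $|\gamma|^2$ bound on the inner piece and the exponential bound on the outer piece from Hypothesis \ref{Hypo2}(a)(i). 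The elementary identity
\[
\cosh(t) - 1 - \frac{t^2}{2} = \sum_{k\geq 2} \frac{t^{2k}}{(2k)!} \geq 0
\]
then delivers at once the lower bound $\tilde{t}_0(x,\xi) - \langle \xi, B(x)\xi\rangle \geq 0$.

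The main remaining point -- and the only place where any care is needed -- is to make the $O(|\xi|^4)$ remainder precise. I would use the bound
\[
0 \leq \cosh(t) - 1 - \frac{t^2}{2} \leq \frac{t^4}{24}\cosh(t), \qquad t\in\R,
\]
set $t=\gamma\cdot\xi$, and split the $\gamma$-integration at $|\gamma|=1$. Restricting to $|\xi|\leq 1$: on $\{|\gamma|\leq 1\}$ the integrand is bounded by $C |\gamma|^4|\xi|^4 \cosh(1) \leq C'|\gamma|^2|\xi|^4$, which is integrable by Hypothesis \ref{Hypo2}(a)(i); on $\{|\gamma|\geq 1\}$ the integrand is bounded by $C|\gamma|^4 e^{|\gamma|}|\xi|^4 \leq C'' e^{2|\gamma|}|\xi|^4$, again $K^{(0)}$-integrable by the exponential bound. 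Adding these two pieces gives the $O(|\xi|^4)$ control uniformly in any bounded neighborhood of $\xi=0$, completing the proof.
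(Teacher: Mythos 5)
Your proof is correct and follows essentially the same route as the paper: both parts come from differentiating/Taylor-expanding the explicit integral for $\tilde{t}_0$, using Hypothesis \ref{Hypo2}(a)(ii) together with $\cosh\geq 1$ for the uniform convexity and positive definiteness of $B$, and controlling the fourth-order remainder via the exponential moment bounds of Hypothesis \ref{Hypo2}(a)(i). Your added care in justifying differentiation under the integral and in splitting the remainder estimate at $|\gamma|=1$ only makes explicit what the paper leaves implicit.
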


\begin{proof}
(a): 
By Hypothesis \ref{Hypo2}(a)(ii) there exists $c_x>0$ such that  for all $\xi_0, v\in\R^d$
\[
 \skp{v}{D^2L_x|_{\xi_0} v} = \int_{\R^d\setminus\{0\}} (\gamma \cdot v)^2 \cosh (\gamma\cdot\xi_0) K^{(0)}(x, d\gamma) 
\geq \int_{\R^d\setminus\{0\}} (\gamma \cdot v)^2  K^{(0)}(x, d\gamma) \geq c_x \|v\|^2\; .
\]
(b):
Since by Taylor expansion at $\xi=0$
\[ \cosh (\gamma\cdot\xi) - \bigl( 1 + \tfrac{1}{2}(\gamma\cdot\xi)^2\bigr) \leq (\gamma\cdot\xi)^4\, 
\frac{\sinh (\gamma\cdot\xi)}{(\gamma\cdot\xi)}\; , \]
one gets from \eqref{tildetdef} and Hypotheses \ref{Hypo1} and \ref{Hypo2}
\[
0 \leq\bigl|\tilde{t}_0 (x, \xi)  - \skp{\xi}{B(x)\xi}\bigr|  \leq \int_{\R^d\setminus\{0\}}  (\gamma\cdot\xi)^4\, 
\frac{\sinh (\gamma\cdot\xi)}{(\gamma\cdot\xi)} K^{(0)}(x, d\gamma)  =  O\left(|\xi|^4\right) \; ,
\]
as $|\xi| \to 0$, where the symmetric $d\times d$-matrix $B=(B_{\mu\nu})$ is given by
\[
  B_{\nu\mu}(x) = \frac{1}{2}\int_{\R^d} \gamma_\nu\gamma_\mu K^{(0)}(x, d\gamma) 
\qquad
\mbox{for}\quad \mu,\nu\in\{1,\ldots,d\}\; , x\in\R^d\; .
\]
By Hypothesis \ref{Hypo2}(a)(ii), $B$ is strictly positive definite, by Hypothesis \ref{Hypo2}(a)(i), $B$ is bounded.\\
\end{proof}

\begin{Lem}\label{d}
 Assume Hypotheses \ref{Hypo1}, \ref{Hypo2} and \ref{Hypo3}, then
\ben
\item $d(x) = \frac{1}{2}\skp{x-x_j}{D^2d|_{x_j} (x-x_j)} + o(|x-x_j|^2)$ as $|x-x_j|\to 0$,  and $D^2d|_{x_j}$ is positive definite.
\item $\nabla d(x) = O(|x-x_j|)$ as $|x-x_j|\to 0$.
\een
\end{Lem}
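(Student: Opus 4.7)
The plan is to prove (b) directly from the $\Ce^2$ regularity of $d$ near $x_j$, and then to extract the positive definiteness of $D^2d|_{x_j}$ in (a) by comparing the eikonal equation at $x_j$ with the non-degeneracy of the minimum of $V_0$.

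First I would observe that, because $d\geq 0$ with $d(x_j)=0$, the point $x_j$ is a global minimum of $d$. Since $d\in\Ce^2(\overline{\O})$ and $x_j$ lies in the open set $\O$, the first-order condition gives $\nabla d(x_j) =0$ and the second-derivative test yields $M:=D^2 d|_{x_j}\geq 0$. A second-order Taylor expansion at $x_j$ then immediately delivers
\[
 d(x) = \tfrac{1}{2}\skp{x-x_j}{M(x-x_j)} + o(|x-x_j|^2),\qquad
\nabla d(x) = M(x-x_j) + o(|x-x_j|) = O(|x-x_j|),
\]
which proves part (b) and the first half of part (a).

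To get strict positivity of $M$, I would argue via the eikonal equation $\tilde{t}_0(x,\nabla d(x)) = V_0(x)$ in Hypothesis \ref{Hypo3}(c). By Hypothesis \ref{Hypo2}(b)(ii), $V_0$ has a non-degenerate minimum at $x_j$, so there exists $c>0$ with $V_0(x)\geq c|x-x_j|^2$ for $x$ near $x_j$. On the other hand, since $\nabla d(x)\to 0$ as $x\to x_j$, Lemma \ref{propt}(b) together with the boundedness of $B$ gives
\[
 \tilde{t}_0(x,\nabla d(x)) = \skp{\nabla d(x)}{B(x)\nabla d(x)} + O(|\nabla d(x)|^4) \leq C\,|\nabla d(x)|^2 + O(|\nabla d(x)|^4)
\]
for $x$ sufficiently close to $x_j$, with $C$ a uniform bound on $\|B\|$. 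Fixing $y\in\R^d\setminus\{0\}$ and setting $x=x_j+ty$ for $t>0$ small, the Taylor expansion of $\nabla d$ yields $\nabla d(x) = tMy + o(t)$, so $|\nabla d(x)|^2\leq t^2|My|^2 + o(t^2)$ and $|\nabla d(x)|^4 = O(t^4)$. Substituting into the eikonal equation and dividing by $t^2$ produces
\[
 c|y|^2 \leq C\,|My|^2 + o(1)\qquad (t\to 0),
\]
hence $c|y|^2 \leq C|My|^2$ for every $y\neq 0$. Therefore $M$ is injective; being also symmetric and positive semi-definite, $M$ is positive definite.

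The mild obstacle is essentially bookkeeping: making sure the remainder $O(|\nabla d(x)|^4)$ from Lemma \ref{propt}(b) and the cross terms coming from $B(x)-B(x_j)$ are indeed absorbed into the $o(t^2)$ error. The first is immediate from $|\nabla d(x)|=O(t)$; for the second one only needs the uniform bound on $B$ (not its continuity), so these terms are bounded by $C t^2 |My|^2$ up to a negligible remainder, which is exactly what the above argument exploits.
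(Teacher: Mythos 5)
Your proof is correct and follows essentially the same route as the paper: the positive definiteness of $D^2d|_{x_j}$ is extracted from the eikonal equation by comparing the quadratic lower bound on $V_0$ at its non-degenerate minimum with the upper bound $\tilde{t}_0(x,\nabla d(x))\leq C|\nabla d(x)|^2+O(|\nabla d(x)|^4)$ from Lemma \ref{propt}(b), exactly as in the paper (which phrases the injectivity of $M$ as a contradiction argument on a putative kernel direction). The only minor variation is in part (b): you obtain $\nabla d(x_j)=0$ from the interior-minimum condition and then Taylor-expand $\nabla d$, whereas the paper deduces $|\nabla d(x)|=O(|x-x_j|)$ from the eikonal equation via the lower bound $V_0(x)\geq C|\nabla d(x)|^2$; both are valid.
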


\begin{proof}
(b): For $|x-x_j|$ sufficiently small, the eikonal equation \eqref{eikonal} holds. Thus by Lemma \ref{propt} (b), we have,
with $B(x)$ positive definite and bounded,
\begin{align}\label{d-1}
 V_0(x) &= \tilde{t}(x, \nabla d(x)) = \skp{ \nabla d(x)}{ B(x) \nabla d(x)} + O (|\nabla d(x)|^4)  \\
&\geq C |\nabla d(x)|^2  \,  \label{d-3}\; .
\end{align}
\eqref{d-3} proves (b), since $V_0(x) = O(|x-x_j|^2)$ 
(Hypothesis  \ref{Hypo2}(b)).

(a): Since $d\in\Ce^2(\O)$, $d(x_j)=0$ and  $\nabla d(x_j) = 0$ (use (b)), Taylor expansion gives
\[
 d(x) = \frac{1}{2}\skp{x-x_j}{D^2d|_{x_j} (x-x_j)} + o(|x-x_j|^2)\qquad\text{as}\quad |x-x_j|\to 0\; .
\]
Since $d(x) \geq 0$, the matrix $D^2d|_{x_j}$ is non-negative. We shall now assume that $0$ is an eigenvalue of
$D^2d|_{x_j}$ with eigenspace $\mathcal{N}\subset \R^d$ and derive a contradiction.

By the mean value theorem and the continuity of $D^2d|_{x}$
\[ \nabla d(x) = \int_0^1 D^2d|_{x_j+t(x-x_j)} (x-x_j) \, dt = D^2d|_{x_j} (x-x_j) + o(|x-x_j|)\qquad (|x-x_j|\to 0) .\]
Thus
\[ \nabla d(x) = o(|x-x_j|) \qquad (|x-x_j|\to 0, (x - x_j) \in\mathcal{N}) \; . \]
By \eqref{d-1} this gives $V_0(x) = o(|x-x_j|^2)$   as $x-x_j\to 0$ in $\mathcal{N}$, which contradicts $D^2V(x_j)>0$ (Hypothesis \ref{Hypo2}(b)). Thus $D^2d|_{x_j}$ is positive definite.
\end{proof}

\begin{Lem}\label{Philem}
Assume Hypotheses \ref{Hypo1}, \ref{Hypo2} and \ref{Hypo3} and let $\chi\in{\Ce}^\infty(\R_+,[0,1])$ such that $\chi (r)=0$ for $r\leq
\frac{1}{2}$ and $\chi (r) =1$ for $r\geq 1$. In addition we assume that
$0\leq \chi'(r) \leq \frac{2}{\log 2}$.
For $B>0$ we define
$g: \overline{\Sigma} \ra [0,1]$ by
\begin{equation}\label{defg}
g(x):= \chi\left(\frac{d(x)}{B\ep}\right)\; ,\qquad x\in \overline{\Sigma}
\end{equation}
and set
\begin{equation}\label{DefPhix}
\Phi (x) := d(x) - \frac{B\ep}{2}\ln \left(\frac{B}{2}\right) -
g(x)\frac{B\ep}{2} \ln \left(\frac{2d(x)}{B\ep}\right)\; ,\qquad x\in \overline{\Sigma}\, .
\end{equation}
Then $\Phi\in\Ce^2(\overline{\Omega})$ and there exists a constant $C>0$ such that for all $\ep\in(0,\ep_0]$
\begin{equation}\label{deltaphibound}
\left|\partial_\nu\partial_\mu \Phi(x)\right|\leq C \, , \qquad x\in\Sigma\, , \mu,\nu\in \{1,\ldots d\}\; .
\end{equation}
Furthermore, for any $B>0$ there is $C'>0$ such that
\begin{equation}\label{ePhied}
e^{\frac{d(x)}{\ep}}\frac{1}{C'}\left(1+\frac{d(x)}{\ep}\right)^{-\frac{B}{2}}\leq
e^{\frac{\Phi(x)}{\ep}} \leq
e^{\frac{d(x)}{\ep}} C'\left(1+\frac{d(x)}{\ep}\right)^{-\frac{B}{2}}\; .
\end{equation}
\end{Lem}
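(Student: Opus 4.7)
The plan is to analyze $\Phi$ on three natural regions determined by the cutoff $\chi$: an \emph{inner} region $I_\ep := \{d \leq B\ep/2\}$ where $g\equiv 0$ and $\Phi = d - \tfrac{B\ep}{2}\ln(B/2)$; an \emph{outer} region $O_\ep := \{d \geq B\ep\}$ where $g\equiv 1$ and $\Phi = d - \tfrac{B\ep}{2}\ln(d/\ep)$; and a \emph{transition} region $T_\ep := \{B\ep/2 \leq d \leq B\ep\}$. On $T_\ep$ the bound $d(x) = O(\ep)$ forces, via Lemma \ref{d}, both $|x-x_j|=O(\sqrt{\ep})$ and $|\nabla d(x)|^2 = O(\ep)$; these are the source of the decisive cancellations in what follows.

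For the $\Ce^2$-regularity, I would first observe that $d \in \Ce^2(\overline{\O})$ by Hypothesis \ref{Hypo3}(b), that $\ln(2d/(B\ep))$ is smooth wherever $d>0$, and that $g$ is a $\Ce^\infty$ composition. The only non-trivial point is gluing across the two interfaces $\{d = B\ep/2\}$ and $\{d = B\ep\}$. Across these, $\chi$ (resp.\ $\chi-1$) and all its derivatives vanish by construction, so $g\cdot\ln(2d/(B\ep))$ together with its first two derivatives match on both sides, yielding $\Phi \in \Ce^2(\overline{\O})$.

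For the uniform bound \eqref{deltaphibound}, I would apply the product and chain rules to compute $\partial_\mu \partial_\nu \Phi$ region by region. On $I_\ep$ one simply gets $\partial_\mu \partial_\nu d$, bounded by Hypothesis \ref{Hypo3}(b). On $O_\ep$ one obtains
\[
\partial_\mu\partial_\nu d\,\bigl(1 - \tfrac{B\ep}{2d}\bigr) + \tfrac{B\ep}{2d^2}\partial_\mu d\,\partial_\nu d,
\]
which is bounded since $B\ep/d \leq 1$ and $|\nabla d|^2/d \leq C$ (a consequence of Lemma \ref{d}(a),(b) near $x_j$, combined with boundedness away from $x_j$). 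The main obstacle is the transition region $T_\ep$, where differentiating $g=\chi(d/(B\ep))$ twice produces singular factors $\chi'/\ep$ and $\chi''/\ep^2$. Each such factor must be absorbed: one factor $1/\ep$ against $|\nabla d|^2 = O(\ep)$, two factors $1/\ep^2$ against $|\nabla d|^4 = O(\ep^2)$ or against $\ep\cdot|\partial_\mu\partial_\nu d|$. Using in addition that $\ln(2d/(B\ep)) \in [0,\ln 2]$ throughout $T_\ep$, a careful product-rule expansion renders every term $O(1)$ uniformly in $\ep$.

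Finally, \eqref{ePhied} is a direct pointwise comparison. Rewriting
\[
\tfrac{1}{\ep}(\Phi(x) - d(x)) \;=\; -\tfrac{B}{2}\ln(B/2) - \tfrac{B}{2}g(x)\,\ln\!\bigl(2d(x)/(B\ep)\bigr),
\]
on $O_\ep$ this simplifies to $-\tfrac{B}{2}\ln(d/\ep)$; its difference from $-\tfrac{B}{2}\ln(1+d/\ep)$ equals $\tfrac{B}{2}\ln(1+\ep/d) \in [0, \tfrac{B}{2}\ln(1+1/B)]$, which is bounded. On $I_\ep$ the expression is a fixed constant and $1 + d/\ep \in [1, 1+B/2]$, so both sides of \eqref{ePhied} are trapped between positive constants. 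On $T_\ep$ all quantities remain in fixed compact ranges depending only on $B$. Taking the worst of the three resulting constants produces a single $C'$ valid on all of $\overline{\Sigma}$.
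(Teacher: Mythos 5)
Your argument is correct and is essentially the proof the paper has in mind: the paper merely cites Lemma 3.3 of \cite{kleinro}, whose verification is exactly your region-by-region analysis based on the cutoff $\chi$, with Lemma \ref{d} supplying the decisive bounds $|\nabla d|^2\leq Cd$ and $d\asymp\ep$ (hence $|\nabla d|^2=O(\ep)$) on the transition region, and the boundedness of $\ln(2d/(B\ep))$ there. The only blemish is bookkeeping: the worst term in the transition region is $\ep\cdot|\nabla d|^2/\ep^2=O(1)$, so no $|\nabla d|^4$ ever appears, but this does not affect the validity of the proof.
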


\begin{proof}
Using the estimates of Lemma \ref{d}, the proof follows word by word the proof of Lemma 3.3 in \cite{kleinro}.
\end{proof}

\begin{Lem}\label{tnullVnull}
Let $j\in\Ce_0^\infty (\R^d)$ be non-negative with $\int_{\R^d}j(x)\, dx = 1$ and 
$\supp j\subset B_1 (0):= \{x\in\R^d\, |\, |x|<1\}$. 
For $\delta>0$ we introduce the Friedrichs mollifier
$j_\delta (x) := \delta^{-d}j(\frac{x}{\delta})$.
Under the assumptions of Theorem \ref{Theorem2}, setting $d_\delta :=d * j_\delta$,
we have, locally uniformly in $x\in\R^d$,
\begin{equation}\label{tnullVnullglg}
 V_0(x) \geq \tilde{t}_0(x, \nabla d_\delta (x)) + o(1) \qquad (\delta \to 0)\; .
\end{equation}
\end{Lem}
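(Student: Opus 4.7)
The plan is to combine (i) convexity of $\xi\mapsto \tilde{t}_0(x,\xi)$, which turns the commutation with convolution into a one-sided Jensen inequality, (ii) the continuity hypothesis \eqref{K0stetig}, which lets me swap $\tilde{t}_0(x,\cdot)$ for $\tilde{t}_0(x-z,\cdot)$ with a $o(1)$ error, and (iii) the eikonal inequality \eqref{eikonalun}, interpreted pointwise a.e.\ since $d$ is Lipschitz. First I would note that $d$ is Lipschitz with some constant $L$, so by Rademacher $\nabla d \in L^\infty$ with $\|\nabla d\|_\infty\leq L$; hence $d_\delta$ is smooth with $\nabla d_\delta=(\nabla d)*j_\delta$ and $\|\nabla d_\delta\|_\infty\leq L$ uniformly in $\delta$ (after extending $d$ to a global Lipschitz function on $\R^d$, if needed). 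By Lemma \ref{propt}(a), $\xi\mapsto \tilde{t}_0(x,\xi)$ is convex, so Jensen gives
\[
\tilde{t}_0\bigl(x, \nabla d_\delta(x)\bigr) \leq \int \tilde{t}_0\bigl(x, \nabla d(x-z)\bigr)\, j_\delta(z)\, dz.
\]

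To control the error in replacing $\tilde{t}_0(x,\cdot)$ by $\tilde{t}_0(x-z,\cdot)$, I would use $\cosh t - 1 \leq \frac{t^2}{2} e^{|t|}$ together with $|\nabla d(x-z)|\leq L$ to get
\[
\bigl|\tilde{t}_0(x,\xi) - \tilde{t}_0(x-z,\xi)\bigr|
\leq \frac{L^2}{2} \int_{\R^d\setminus\{0\}} |\gamma|^2 e^{L|\gamma|}\, \bigl|K^{(0)}(x, d\gamma) - K^{(0)}(x-z, d\gamma)\bigr|,
\]
which by \eqref{K0stetig} with $c=L$ is $o(1)$ as $|z|\to 0$, locally uniformly in $x$ and uniformly in $|\xi|\leq L$. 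Applying the a.e.\ eikonal inequality \eqref{eikonalun} at $y=x-z$ slice-by-slice then yields
\[
\tilde{t}_0\bigl(x, \nabla d_\delta(x)\bigr) \leq (V_0*j_\delta)(x) + o(1) \qquad (\delta \to 0),
\]
and since $V_0\in\Ce^2$ is continuous, $V_0*j_\delta \to V_0$ locally uniformly, giving \eqref{tnullVnullglg}.

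The main obstacle will be the second step: transferring the measure-valued continuity \eqref{K0stetig} into $\xi$-uniform continuity of the real-valued function $\tilde{t}_0(\cdot,\xi)$ on bounded sets. The exponential weight $e^{c|\gamma|}$ appearing in \eqref{K0stetig} is tailored precisely to dominate the bound $\cosh(\gamma\cdot\xi)-1\leq \tfrac{L^2}{2}|\gamma|^2 e^{L|\gamma|}$ valid on $\{|\xi|\leq L\}$, so that the choice $c=L$ suffices. A secondary, essentially cosmetic point is that $d$ might have to be Lipschitz-extended to $\R^d$ so that $d_\delta$ is defined on a neighbourhood of $\overline{\Sigma}$; this is harmless since the conclusion is only asserted locally uniformly.
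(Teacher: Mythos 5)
Your proposal is correct and follows essentially the same route as the paper's proof: Jensen's inequality applied to the convex function $\xi\mapsto\tilde{t}_0(x,\xi)$ and the probability measure $j_\delta(z)\,dz$, the continuity hypothesis \eqref{K0stetig} (with the bound $\cosh(\gamma\cdot\xi)-1\leq C|\gamma|^2e^{C|\gamma|}$ uniform on $|\xi|\leq L$) to replace $\tilde{t}_0(x,\cdot)$ by $\tilde{t}_0(x-z,\cdot)$ up to $o(1)$, then the eikonal inequality at $x-z$ followed by continuity of $V_0$. Your explicit remarks on the a.e.\ interpretation of $\nabla d$ via Rademacher and on extending $d$ off $\overline{\Sigma}$ are reasonable technical supplements that the paper leaves implicit.
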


We emphasize that $\nabla d_\delta $ does {\em not} converge to $\nabla d $ in $|| . ||_\infty$.
The estimate \eqref{tnullVnullglg} compensates. This is crucial to obtain the positivity needed in our Agmon estimate.

\begin{proof}
 First observe that by \eqref{K0stetig}, \eqref{abfallagamma1} and \eqref{abfallagamma2}
\begin{equation}\label{tnullVnull1}
 \tilde{t}_0(x-y, \xi) - \tilde{t}_0(x,\xi) = \int_{R^d\setminus\{0\}} \bigl( \cosh \gamma\cdot\xi - 1\bigr) 
\bigl( K^{(0)}(x-y, d\gamma) - K^{(0)}(x, d\gamma)\bigr) = o(1)
\end{equation}
as $|y|\to 0$ locally uniformly in $(x,\xi)\in\R^{2d}$ (since $|\cosh \gamma\cdot\xi - 1| \leq C |\gamma|^2 e^{C|\gamma|}$).

We remark that 
\begin{equation}\label{tnullVnull2}
\nabla d_\delta (x) = \int_{\R^d} \nabla d(x-y) j_\delta (y) \, dy = \mathbb{E}_\delta \bigl[\nabla d(x - \,.\,)\bigr] \, ,
\end{equation}
 where $\mathbb{E}_\delta$ denotes expectation with respect to the probability measure 
$d\mu_\delta (y) = j_\delta(y)\, dy$ (supported in the ball $B_\delta(0)$). Recall the multidimensional Jensen
inequality (see e.g. \cite{dud})
\begin{equation}\label{Jensen}
 \mathbb{E}\bigl[ f(X)\bigr] \geq f\bigl(\mathbb{E}[X]\bigr)
\end{equation}
for any convex function $f: \R^d \rightarrow \R$ and random variable $X$ with values in $\R^d$. Choosing 
$X(\,.\,)=\nabla d(x-\,.\,)$ and using the convexity of $\tilde{t}_0(x,\,.\,)$ (see Lemma \ref{propt}), we get by
\eqref{tnullVnull2} and \eqref{Jensen}
\begin{align}
\tilde{t}_0(x, \nabla d_\delta (x) ) &\leq \int_{\R^d} \tilde{t}_0(x, \nabla d(x - y) ) d\mu_\delta (y) \nonumber\\
&=  \int_{\R^d} \tilde{t}_0(x-y, \nabla d(x - y) ) d\mu_\delta (y) + o(1) \qquad (\delta \to 0)\, ,\label{tnullVnull3}
 \end{align}
where the last equality follows from \eqref{tnullVnull1} and $\supp j_\delta \subset B_\delta (0)$.
Thus, by \eqref{tnullVnull3} and the eikonal inequality \eqref{eikonalun}
\[
 \tilde{t}_0(x, \nabla d_\delta (x) )  \leq \int_{\R^d}V_0(x-y) d\mu_\delta (y) + o(1) \leq V_0(x) + o(1) \qquad (\delta \to 0)
\]
\end{proof}

\section{Proof of Theorem 1.5 and 1.6}\label{proofweig}

\begin{proof}[Proof of Theorem 1.6]

We partly follow the ideas in the proof of Theorem 1.7 in \cite{kleinro}.\\

{\sl Proof of (b):}\\

For $\Sigma'(x)$ given in \eqref{Sigma'}, let
\[
\tilde{t}_0^\Sigma(x,\xi) :=
\int_{\Sigma'(x)} \left(\cosh \left(\gamma\cdot\xi\right) - 1\right) K^{(0)}(x, d\gamma) , 
\qquad (x,\xi)\in\Sigma\times\R^d\, ,
\]
then by the positivity of the integrand 
\begin{equation}\label{tkleiner}
\tilde{t}_0^\Sigma(x,\xi) \leq \tilde{t}_0(x,\xi)\; .
\end{equation}
For any $B>0$ we choose $\ep_B>0$ such that $d^{-1}([0,\ep_B B))\subset U$, then by Hypothesis \ref{Hypo3} for all $\ep<\ep_B$
\begin{equation}\label{condB}
V_0(x) - \tilde{t}_0(x,\nabla d(x)) = 0 \, ,
\qquad x\in\Sigma\cap d^{-1}([0,B\ep)) \, .
\end{equation}
Let $\Phi$ be given in \eqref{DefPhix}, then by \eqref{defg} 
\begin{equation}\label{gradPhi}
\nabla \Phi(x) = \nabla d(x) (1-f_1(x) - f_2(x)) \; ,
\end{equation}
where
\[ 
f_1(x) := \frac{B\ep}{2d(x)}\chi\left(\frac{d(x)}{B\ep}\right)\quad\text{and}\quad
f_2(x) := \frac{1}{2}
\chi'\left(\frac{d(x)}{B\ep}\right)\,\log \left(\frac{2d(x)}{B\ep}\right).
\]
Choose $\eta>0$ such that $\tilde{K}:=d^{-1}([0, D+2\eta])\subset \O$ and let
$\hat{\chi}, \tilde{\chi}\in\Ce^\infty (\R_+, [0,1])$ be monotone with 
\[
\tilde{\chi}(x)= \begin{cases} 0\, , \quad x\leq D+\eta\\ 1\, , \quad x \geq D+ 2\eta\end{cases}\qquad 
\hat{\chi}(x)= \begin{cases} 0\, , \quad x\leq D  \\ 1\, , \quad x \geq D+\eta \end{cases}\; .
\]
Then we define 
\begin{equation}\label{tildehatg}
  \tilde{g}(x):= \tilde{\chi}(d(x))\qquad\text{and}\qquad \hat{g}(x):= \hat{\chi}(d(x))
\end{equation}
and we set for $\delta>0$
\[
 \Phi_{\alpha, \delta} (x) = (1-\hat{g}(x)) \Phi(x) + \hat{g}(x)  \bigl( 1- \tfrac{\alpha}{2}\bigr)
 \bigl((1-\tilde{g}(x)) d(x) + \tilde{g}(x) d_\delta (x)\bigr)\; ,
\]
where $d_\delta= d*j_\delta$ is defined in Lemma \ref{tnullVnull}. Then $\Phi_{\alpha, \delta}\in \Ce^2(\overline{\Sigma})$ for any $\delta>0$.\\

{\sl Step 1:} We show that there is $\delta(\alpha)$ such that for any $\delta<\delta(\alpha)$ the function $\Phi_\alpha := \Phi_{\alpha, \delta}$ satisfies
\eqref{abinK} and \eqref{abohneK}.\\

Clearly, $\Phi_{\alpha, \delta}$ satisfies \eqref{abinK} for all $\delta>0$ in view of \eqref{ePhied}, since $\Phi_{\alpha, \delta} (x) = \Phi (x)$ for $x\in K$.\\
Now, by \eqref{eikonal}, for $x\in\Sigma\setminus K$
\begin{equation}\label{thm2-2}
  \Phi_{\alpha, \delta} (x) = d(x) - \hat{g}(x)\tfrac{\alpha}{2} d(x) - (1-\hat{g})(x) \Bigl( \frac{B\ep}{2} 
\ln \Bigl(\frac{d(x)}{\ep}\Bigr)\Bigr) + 
  \tilde{g}(x) \bigl(1-\tfrac{\alpha}{2}\bigr) \bigl( d_\delta - d\bigr)(x)
\end{equation}
Choosing $B\geq 2$, all logarithms in \eqref{thm2-2} are positive 
(using $\frac{2 d(x)}{B\ep} \geq 1$ on the support of $g$). Since $\|d_\delta - d\|_\infty \to 0$ as $\delta \to 0$ and
using that for some $C$, by Hypothesis \ref{Hypo3},
\begin{equation}\label{thm2-5}
 \inf \{d(x)\,|\, x\in\Sigma\setminus K\} \geq C >0\, ,
\end{equation}
it follows that there is a 
$\delta(\alpha)$ such that for all $\delta < \delta(\alpha)$
\begin{equation}\label{thm2-3}
 \bigl(1-\tfrac{\alpha}{2}\bigr) \bigl|d_{\delta}(x) - d(x)\bigr| \leq \tfrac{\alpha}{2} d(x)\, , \quad x\in \Sigma\setminus K\, ,
\end{equation}
proving the upper bound in \eqref{abohneK} for $\Phi_\alpha$.

Now observe that there is an $\ep_\alpha>0$ such that for all $\ep\in (0,\ep_\alpha)$
\begin{equation}\label{thm2-4}
 \frac{B\ep}{2} \ln \Bigl(\frac{d(x)}{\ep}\Bigr) \leq \frac{\alpha}{4} d(x)\, , \qquad x \in \Sigma\setminus K \; .
\end{equation}
This follows from the fact that lhs\eqref{thm2-4}$=o(1)$ as $\ep \to 0$ uniformly in $x$ together with \eqref{thm2-5}.
Inserting \eqref{thm2-4} and \eqref{thm2-3} into \eqref{thm2-2} proves the lower bound of \eqref{abohneK}. \\

{\sl Step 2:} We shall show that there are constants $\alpha_0, C_0, C_1>0$ independent of $B$ and $E$ and $\ep_\alpha, \delta(\alpha)>0$ such that
for all $\delta<\delta(\alpha), \, \ep<\ep_\alpha$ and for any fixed $\alpha\in (0,\alpha_0]$
\begin{equation}\label{Vnulltab}
V_0(x) - \tilde{t}_0^\Sigma(x, \nabla\Phi_{\alpha,\delta}(x)) \geq \begin{cases} 0\, ,&\, x\in\Sigma\cap d^{-1}([0, B\ep)) \\
\frac{B}{C_0}\ep \, ,&\, x\in\Sigma\cap d^{-1}([B\ep, D+\eta))\\
C_1 \, , &\, x\in \Sigma \cap d^{-1}([D+\eta, \infty))
\end{cases}
\end{equation}

{\sl Case 1:} $d(x) \leq \frac{B\ep}{2}$\\
Since $\Phi_{\alpha, \delta}(x) = d(x) - \frac{B\ep}{2}\ln\bigl(\frac{B}{2}\bigr)$ and the eikonal equation
\eqref{eikonal} holds, we get
\[
V_0(x) - \tilde{t}_0(x, \nabla \Phi_{\alpha,\delta}(x)) = V_0(x) - \tilde{t}_0(x,\nabla d(x)) = 0 \, ,\qquad
x\in\Sigma\cap d^{-1}([0, \tfrac{B\ep}{2}]) \; .
\]
which by \eqref{tkleiner} leads  to \eqref{Vnulltab}.\\

{\sl Case 2:} $\frac{B\ep}{2}<d(x)<B\ep$\\
Here $\Phi_{\alpha,\delta}(x) = \Phi(x)$.  
Since $1<\frac{2d(x)}{B\ep}<2$, $f_1$ and $f_2$ in \eqref{gradPhi} are non-negative.
In addition $0\leq f_j(x) \leq 1, j=1,2$ (use assumption
$\chi'(r)\leq \frac{2}{\log 2}$). Therefore
\begin{equation} \label{f}
|1-f_1(x) - f_2(x)| =: |\lambda(x)| \leq 1.
\end{equation}
By Lemma \ref{propt}, $\tilde{t}_0(x,\xi)$ is convex with respect to $\xi$, therefore
\begin{equation}\label{convexun}
\tilde{t}_0(x,\lambda \xi + (1-\lambda)\eta) \leq \lambda \tilde{t}_0(x,\xi) +
(1-\lambda) \tilde{t}_0(x, \eta)\quad\mbox{for}\quad 0\leq \lambda\leq 1,\; \xi,\eta\in \R^d\; .
\end{equation}
and, since $\tilde{t}_0(x,0)=0$ and $\tilde{t}_0(x,\xi) = \tilde{t}_0(x,-\xi)$, it follows by choosing $\eta=0$ that
\begin{equation}\label{convexun2}
 \tilde{t}_0(x, \lambda \xi) \leq |\lambda| \tilde{t}_0 (x, \xi)\, , \qquad \text{for}\quad \lambda\in\R, |\lambda|\leq 1, 
\xi\in\R^d, x\in\Sigma\; .
\end{equation}
Combining \eqref{f}, \eqref{convexun2} and \eqref{tkleiner}
it follows that
\begin{equation}\label{Vnulltlambda}
V_0(x) - \tilde{t}_0^\Sigma(x, \nabla \Phi_{\alpha, \delta} (x))  \geq
V_0(x) - |\lambda(x)| \tilde{t}_0(x, \nabla d(x)) \geq  V_0 (1-|\lambda (x)|)\; ,
\end{equation}
where for the second step we used \eqref{condB}.
Since $|\lambda(x)|\leq 1$ and $V_0\geq 0$, \eqref{Vnulltlambda} gives \eqref{Vnulltab}.\\

{\sl Case 3:} $B\ep\leq d(x) < D$\\
In this region, we have $\Phi_{\alpha,\delta}(x) =\Phi (x)= d(x) - \frac{B\ep}{2}\ln\bigl(\frac{d(x)}{\ep}\bigr)$, thus 
\begin{equation}\label{wei1}
\nabla \Phi_{\alpha,\delta} (x)  = \nabla d(x) \left(1-\frac{B\ep}{2d(x)}\right) \; .
\end{equation}

Since $\frac{1}{2}\leq (1-\frac{B\ep}{2d(x)}) < 1$,
by \eqref{wei1} and \eqref{convexun2} we get the estimate
\begin{align}
V_0(x) - \tilde{t}_0(x, \nabla \Phi_{\alpha,\delta} (x)) &\geq V_0(x) - \left(1-\frac{B\ep}{2d(x)}\right)
\tilde{t}_0(x, \nabla d(x)) \nonumber \\
&\geq 
V_0(x) \frac{B\ep}{2d(x)} \, ,\label{Vnullcase2}
\end{align}
where for the second estimate we used that by Hypothesis \ref{Hypo3} the eikonal inequality $\tilde{t}_0(x, \nabla d(x)) \leq
V_0(x)$ holds. 
We now claim that there exists a constant $C_0>0$ such that
\begin{equation}\label{Vnulld}
\frac{V_0(x)}{2d(x)} \geq C^{-1}_0 \, ,\qquad
x\in\Sigma\cap d^{-1}([ B\ep,\infty)) \; .
\end{equation}
Then, combining \eqref{tkleiner}, \eqref{Vnullcase2} and \eqref{Vnulld},
we finally get \eqref{Vnulltab}.

To see \eqref{Vnulld}, we split the region $W = \Sigma\cap d^{-1}([ B\ep,\infty))$ into two parts.
Clearly, for any $\delta>0$, \eqref{Vnulld} holds for $x\in W \cap \{|x-x_j|>\delta\}$ (since
$\Sigma$ is bounded, $d\in\Ce^2(\overline{\Sigma})$ and $V_0(x)\geq C>0$ for $|x-x_j|>\delta$ by Hypothesis \ref{Hypo2},(b)).

To discuss the region $W \cap \{|x-x_j|\leq \delta\}$, we remark that for some $C>0$ by Hypothesis \ref{Hypo2},(b) 
$V_0(x) \geq C |x-x_j|^2$ if $|x-x_j|\leq \delta$. Thus it suffices to show that for some $\tilde{C}>0$
\[
 d(x) \leq C |x-x_j|^2\, , \qquad |x-x_j|\leq \delta\; .
\]
This follows from Lemma \ref{d}(a).\\

{\sl Case 4:} $D \leq d(x) < D + \eta$\\
Since $\Phi_{\alpha,\delta}(x) = (1-\hat{g}(x)) \Phi(x) + \hat{g}(x)\bigl(1-\frac{\alpha}{2}\bigr)d(x)$ and $\nabla \Phi(x)$ is given by \eqref{wei1} in this region, we have
\begin{align}\label{case5-1}
 \nabla \Phi_{\alpha,\delta}(x) &= \nabla d(x) \Big[ \Bigl( 1 - \frac{B\ep}{2 d(x)}\Bigr) (1 - \hat{g}(x)) -
\hat{\chi}'(d(x)) \Bigl(\Bigl(d(x)- \frac{B\ep}{2}\ln\Bigl(\frac{d(x)}{\ep}\Bigr)\Bigr) + 
 \nonumber\\ 
&\hspace{1cm}+
 \hat{\chi}'(d(x)) \bigl(1-\tfrac{\alpha}{2}\bigr) d(x) + \hat{g}(x) \bigl(1-\tfrac{\alpha}{2}\bigr)\Big]\nonumber \\
&= \lambda \nabla d(x) \; ,
\end{align}
where
\[
\lambda= 1 + h_\alpha(x) - (1-\hat{g}(x))\frac{B\ep}{2d(x)} - \hat{g}(x)\frac{\alpha}{2}, \qquad
h_\alpha(x) := \hat{\chi}'(d(x)) \Bigl(-\frac{\alpha}{2} d(x) + \frac{B\ep}{2} \ln \Bigl(\frac{d(x)}{\ep}\Bigr)\Bigr).
\]
Since $\hat{\chi}'(y) \geq 0$ it follows from the upper bound in \eqref{thm2-4} that $h_\alpha\leq 0$, proving for $\alpha$
sufficiently small
\begin{equation} \label{newl}
 0 \leq    \lambda \leq 1 - t, \qquad t=t(x,\alpha, \ep) =(1-\hat{g}(x))\frac{B\ep}{2d(x)} + \hat{g}(x)\frac{\alpha}{2}\; .
\end{equation}
Combining   \eqref{tkleiner},  \eqref{convexun2},  \eqref{case5-1}  and\eqref{newl} gives, for all $\ep\leq \ep_\alpha$ sufficiently small
\begin{align*}
V_0(x) - \tilde{t}^\Sigma_0(x, \nabla\Phi_{\alpha,\delta}) 
&\geq V_0(x) t(x,\alpha,\ep)
\geq \frac{B}{C_0}\ep\,,
\end{align*}
where we used \eqref{eikonalun} and, for the last estimate,  \eqref{Vnulld}.\\ 

{\sl Case 5:} $D + \eta \leq d(x) < D+2\eta$\\
We have $\Phi_{\alpha,\delta}(x) = \bigl(1-\frac{\alpha}{2}\bigr) ((1- \tilde{g}(x)) d(x) + \tilde{g}(x)d_\delta(x)$ and thus
\begin{align}
\nabla\Phi_{\alpha,\delta}(x) &= \bigl(1-\tfrac{\alpha}{2}\bigr) \bigl[ (1-\tilde{g}(x)) \nabla d(x) + \tilde{\chi}'(d(x))
(d_\delta(x) - d(x)) \nabla d(x) + \tilde{g}(x) \nabla d_\delta(x)\bigr]\nonumber \\
&= \bigl(1-\tfrac{\alpha}{2}\bigr) \bigl[ (1-\tilde{g}(x)) \nabla d(x) + \tilde{g}(x) \nabla d_\delta(x)\bigr] +
\tfrac{\alpha}{2}\tfrac{2}{\alpha}f_\delta(x)\nabla d(x)\, ,\label{case6-1}
\end{align}
where we set 
\[
f_\delta (x) := \tilde{\chi}'(d(x))(d_\delta(x) - d(x)) \quad\text{and thus}\quad 
|f_\delta(x)| = o(1)\quad (\delta\to 0) \;.
\]
Using \eqref{convexun} twice, we get by \eqref{case6-1}
\[
\tilde{t}_0(x, \nabla \Phi_{\alpha,\delta}(x)) \leq \bigl(1-\tfrac{\alpha}{2}\bigr)
\Bigl[(1-\tilde{g}(x)) \tilde{t}_0(x, \nabla d(x)) + \tilde{g}(x) \tilde{t}_0(x, \nabla d_\delta (x))\Bigr] 
+ \tfrac{\alpha}{2} \tilde{t}_0\bigl(x, \tfrac{2}{\alpha} f_\delta(x)\nabla d(x)\bigr)\; .
\]
Combining Lemma \ref{tnullVnull} 
with \eqref{eikonalun} 
yields, as  $\delta  \to 0,$
\begin{equation}\label{case6-4}
V_0(x) - \tilde{t}_0 \bigl(x, \nabla\Phi_{\alpha,\delta}(x) \bigr)
  \geq V_0(x) \Bigl[ \frac{\alpha}{2} + o(1)\Bigr] \geq C_1\; ,
\end{equation}
since $V_0(x)\geq C>0$ in this region.
Combining \eqref{tkleiner} with  \eqref{case6-4} gives \eqref{Vnulltab}.\\

{\sl Case 6:} $d(x) \geq D+2\eta$\\
Since $\Phi_{\alpha,\delta}(x) = \bigl(1-\frac{\alpha}{2}\bigr)d_\delta(x)$, we have by \eqref{convexun2}
\begin{equation}\label{case4-1}
 \tilde{t}_0\bigl(x, \nabla \Phi_{\alpha,\delta}(x)\bigr) 
\leq  \Bigl(1-\frac{\alpha}{2}\Bigr) \tilde{t}_0 \bigl(x, \nabla d_\delta (x)\bigr)
\end{equation}
Combining Lemma \ref{tnullVnull} with  \eqref{case4-1} gives \eqref{case6-4}, as in {\sl Case 5}.\\

{\sl Step 3:} We shall show
\begin{equation}\label{VepundVPhi}
V_\ep(x) + V^{\Phi_\alpha}(x) \geq \begin{cases}  - C_2\,\ep & \qquad\mbox{for}
\quad x\in\Sigma\cap d^{-1}([0, B\ep]) \\
\left(\frac{B}{C_0}-C_3\right)\ep &
\qquad\mbox{for}\quad  x\in\Sigma\cap d^{-1}([B\ep, D+\eta)) \\
C_4 & \qquad\mbox{for}\quad  x\in\Sigma\cap d^{-1}([D+\eta, \infty))
\end{cases}
\end{equation}
for some $C_2,C_3, C_4>0$ independent of $B$ and $E$, where $V^{\Phi_\alpha}:=V_{\ep,\Sigma}^{\Phi_\alpha}$ is defined in \eqref{Vphiep} and $\Phi_\alpha = \Phi_{\alpha,\delta}$ for any $\delta<\delta(\alpha)$.\\

We write
\begin{equation}\label{wei5}
V_\ep(x) + V^{\Phi_\alpha}(x) = \left(V_\ep(x) - V_0(x)\right) + \left(V^{\Phi_\alpha}(x) + \tilde{t}_0^\Sigma(x, \nabla\Phi_\alpha(x))\right)
 + \left(V_0(x) -
\tilde{t}_0^\Sigma(x, \nabla\Phi_\alpha(x))\right)
\end{equation}
By Hypothesis \ref{Hypo1} and since $\Sigma$ is bounded, there exists a constant $C_1>0$ such that
\begin{equation}\label{Vephalbbe}
V_\ep(x) - V_0(x) \geq - C_1 \ep \, , \qquad x\in\Sigma\; .
\end{equation}
We shall show that
\begin{equation}\label{Vphitsigma}
\left| V^{\Phi_\alpha} (x) + \tilde{t}_0^\Sigma(x, \nabla \Phi_\alpha(x))\right| \leq \ep C_2 \; .
\end{equation}
Then inserting \eqref{Vphitsigma}, \eqref{Vephalbbe} and \eqref{Vnulltab} into \eqref{wei5} proves \eqref{VepundVPhi}.
Setting (see \eqref{Vphiep})
\[ V_{0}^{\Phi_\alpha}(x) :=
\int_{\Sigma'(x)}\left[ 1 -  \cosh \left(F_\alpha(x)\right)\right] K^{(0)}(x, d\gamma), \quad  
F_\alpha(x)=F_\alpha(x,\gamma,\ep)= \tfrac{1}{\ep} (\Phi_\alpha(x)- \Phi_\alpha(x+\ep\gamma))
\]
we write
\[ V^{\Phi_\alpha}(x) + \tilde{t}_0^\Sigma(x, \nabla\Phi_\alpha(x))= \left( V^{\Phi_\alpha}(x) - V_0^{\Phi_\alpha}(x)\right) +
\left( V_0^{\Phi_\alpha} (x) + \tilde{t}_0^\Sigma(x, \nabla\Phi_\alpha(x)) \right) =: D_1(x) + D_2(x) \]
and analyze the two summands on the right hand side separately.
Since $\Phi_\alpha\in\Ce^2(\overline{\Sigma})$, it follows from Hypotheses
\ref{Hypo1} and \ref{Hypo2}(a), using \eqref{lemma2-4}, that for some $\tilde{C}>0$
\begin{equation}\label{D1}
\left|D_1(x)\right| = \Bigl|\int_{\Sigma'(x)}\left[ 1 -  \cosh \left(F_\alpha(x)\right)\right] \left(\ep K^{(1)} + R^{(2)}_\ep\right)(x, d\gamma) \Bigr| \leq \tilde{C} \ep\, .
\end{equation}
uniformly with respect to $x$.
We have for $x\in\Sigma$
\begin{equation}\label{Vphiminust}
\left|D_2(x)\right|  \leq \int_{\Sigma'(x)}
\Bigl|\cosh\bigl(\gamma\nabla\Phi_\alpha(x)\bigr) - 
\cosh\bigl(F_\alpha(x)\bigr)
\Bigr| K^{(0)}(x,d\gamma) \;.
\end{equation}
By the mean value theorem for $\cosh z$ and since $|\sinh x|\leq e^{|x|}$
\begin{multline}\label{mittel}
\Bigl|\cosh\bigl(F_\alpha(x)\bigr)-\cosh\bigl(\gamma\nabla\Phi_\alpha(x)\bigr)\Bigr|
\leq \sup_{t\in[0,1]} \exp\bigl(\bigl| F_\alpha(x)t +\gamma\nabla\Phi_\alpha(x) (1-t)\bigr|\bigr)
 \bigl| F_\alpha(x)  + \gamma\nabla\Phi_\alpha(x) \bigr| \; .
\end{multline}
Since $\Phi_\alpha\in\Ce^2(\overline{\Sigma})$ there exist constants
$c_1,c_2>0$ such that
 \begin{equation}\label{lip}
 |F_\alpha(x)| \leq c_1 |\gamma| \quad\mbox{and}\quad |\gamma\nabla\Phi_\alpha(x)|\leq
 c_2 |\gamma| \; , \qquad x\in\Sigma,\, \gamma\in\Sigma'(x) \; .
 \end{equation}
\eqref{lip} gives a constant $D>0$
such that
 \begin{equation}\label{estexp}
\exp\bigl(\bigl|F_\alpha(x)t +
\gamma\nabla\Phi_\alpha(x) (1-t)\bigr|\bigr)  \leq e^{D |\gamma|}\; .
\end{equation}
By second order Taylor-expansion, using \eqref{deltaphibound}
\begin{equation}\label{dopabab}
\bigl| (F_\alpha(x) + \gamma\nabla\Phi_\alpha(x) \bigr| \leq
\tilde{C}_3 \ep |\gamma|^2 \; .
\end{equation}
for all $\ep\in(0,\ep_0]$ and some $\tilde{C}_3>0$ independent of the choice of $B$.
By \eqref{abfallagamma1}, inserting \eqref{estexp} and \eqref{dopabab} into \eqref{mittel} and this in \eqref{Vphiminust}, using
Hypothesis \ref{Hypo2}(a), we get
\[
\left| D_2 (x)\right| = \left|V_0^{\Phi_\alpha} (x) - t_0^\Sigma(x,-i\nabla\Phi_\alpha)\right|\leq  \ep C'\;.
\]
This and \eqref{D1} give \eqref{Vphitsigma}.\\

{\sl Step 4:} We prove \eqref{weigequ2} and \eqref{eigenu} by use of Lemma \ref{HepDchi}.\\

Choosing $B \geq C_0(1+R_0+C_3)$ (depending only on $R_0$, but independent of $u$ and $E$), we have
\begin{equation}\label{CC0ep}
\left(\frac{B}{C_0}-C_3\right)\ep - E \geq \ep \, ,\qquad E\in[0,\ep R_0]\, .
\end{equation}
Let
\begin{equation}\label{OminusOplus} \O_-:= \{ x\in\Sigma\,|\, V_\ep (x) + V^{\Phi_\alpha} (x) - E < 0\}\qquad
\text{and}\qquad
\O_+ := \Sigma \setminus \O_- \; ,
\end{equation}
then from \eqref{CC0ep} combined with \eqref{VepundVPhi} it follows that $\O_-\subset \{d(x)< \ep B\}$ and
by \eqref{VepundVPhi}
\begin{equation}\label{normVepVphi}
|V_\ep (x) + V^{\Phi_\alpha}(x) | \leq \ep \,\max \{C_3,R_0\} \,\qquad\text{for all}\quad x\in \O_-\; .
\end{equation}
We define the functions $F_{\pm} : \Sigma \ra [0,\infty)$ by
\begin{equation}\label{F+def}
F_+(x) := \sqrt{\ep\id_{\{d(x)<B\ep \}}(x) + (V_\ep(x) + V^{\Phi_\alpha}(x) -E)\id_{\O_+}(x)}
\end{equation}
and
\begin{equation}\label{F-def}
F_-(x) := \sqrt{\ep\id_{\{d(x)<B\ep \}}(x) + (E- V_\ep(x) - V^{\Phi_\alpha}(x))\id_{\O_-}(x)}\; .
\end{equation}
Then $F_\pm$ are well defined and furthermore there exists a constant $C, \tilde{C}>0$ depending only of $R_0$ and $B$ such that
\begin{equation}\label{Feigenschaft}
F:=F_+ + F_- \geq C \,\sqrt{\ep} >0\, ,\qquad \left|F_-\right| \leq \tilde{C}\sqrt{\ep} \qquad \text{and}\qquad F_+^2 - F_-^2 = V_\ep +
V^{\Phi_\alpha} - E\; .
\end{equation}
The first inequality uses \eqref{VepundVPhi} combined with \eqref{CC0ep}.
By \eqref{ePhied} and \eqref{Feigenschaft}
\begin{equation}\label{HDchiab1}
\left\| F e^{\frac{\Phi_\alpha}{\ep}}u\right\|^2_{L^2(\Sigma)}\geq C\ep
\left\| e^{\frac{\Phi_\alpha}{\ep}}u\right\|^2_{L^2(\Sigma)}
\end{equation}
and
\begin{equation}\label{HDchiab2}
\left\| \tfrac{1}{F}e^{\frac{\Phi_\alpha}{\ep}}
\left(H_\ep^\Sigma - E\right)u \right\|^2_{L^2(\Sigma)} \leq
C\ep^{-1}\left\| e^{\frac{\Phi_\alpha}{\ep}}\left(H_\ep^\Sigma - E\right)u \right\|^2_{L^2(\Sigma)}  \, .
\end{equation}
Since $\supp F_-\subset\{d(x)<B\ep\}$, by \eqref{ePhied} and \eqref{Feigenschaft} there
exists a constant $C>0$ such that
\begin{equation}\label{HDchiab4}
\left\| F_- e^{\frac{\Phi}{\ep}}u\right\|^2_{L^2(\Sigma)} \leq
C\ep \left\| u\right\|^2_{L^2(\Sigma)}\, .
\end{equation}
Inserting \eqref{HDchiab1}, \eqref{HDchiab2} and
\eqref{HDchiab4} in
\eqref{FmitF-} yields \eqref{weigequ2}
uniformly with respect to $E\in (0, \ep R_0)$ and $u$.\\

{\sl Proof of (c):}\\

\eqref{thm161} follows at once from \eqref{abinK} and \eqref{abohneK}.\\
If $u$ is an eigenfunction of $H_\ep^\Sigma$ with eigenvalue $E$, then the first summand on rhs\eqref{weigequ2} vanishes. 
Thus \eqref{weigequ2} leads to \eqref{eigenu2}, for $\alpha < \alpha_0$. Using the monotonicity of $\Phi_\alpha$, \eqref{eigenu2} holds
for any $\alpha\in (0, 1]$.\\

{\sl Proof of (a):}\\

For $\hat{g}$ defined in \eqref{tildehatg} and $d_\delta= d *  j_\delta $ defined in Lemma \ref{tnullVnull}, we set 
\[ \tilde{\Phi}_{\alpha, \delta}(x) = \Bigl( 1-\frac{\alpha}{2}\Bigr) 
\Bigl( \bigl( 1 - \hat{g}\bigr)\,d(x) + \hat{g}\, d_\delta(x)\Bigr) \; . \]
Then for all $x\in\Sigma$ and $\delta<\delta_\alpha$
\begin{equation}\label{ii1} 
(1-\alpha) \,d(x) \leq \tilde{\Phi}_{\alpha, \delta}(x) \leq d(x) \; . 
\end{equation}
In fact, if $x\in K$, \eqref{ii1} is trivial, so let $x\in\Sigma\setminus K$. Then, writing 
$\tilde{\Phi}_{\alpha, \delta} = (1-\frac{\alpha}{2})(d + \hat{g}(d_\delta - d))$, \eqref{ii1} follows directly from 
\eqref{thm2-3}.

We now claim that for any fixed $\alpha\in (0,1]$ there exists $\delta_\alpha, \ep_\alpha>0$ such that for all $\delta<\delta_\alpha$
and $\ep<\ep_\alpha$
\begin{equation}\label{ii2}
V_0(x) - \tilde{t}_0^\Sigma (x, \nabla \tilde{\Phi}_{\alpha, \delta}(x)) \geq \begin{cases}
0\, , \quad & x\in \Sigma\cap d^{-1}([0,D) \\ C\, , \quad & x\in \Sigma\cap d^{-1}([D, \infty)) \; .\end{cases}
\end{equation}
If $d(x)\geq D$, this follows similar to the proof of \eqref{Vnulltab} (the regions $D\leq d(x) < D + \eta$ and $d(x)\geq D+\eta$ correspond
to Case 5 and 6 resp.).

If $d(x)<D$, we have 
$\nabla\tilde{\Phi}_{\alpha, \delta}(x) = (1-\frac{\alpha}{2})\nabla d(x)$ and thus by the convexity of $\tilde{t}_0$
 and the eikonal inequality \eqref{eikonalun}, analog to Step 2, Case 3 in the proof of (b),
\[ V_0(x) - \tilde{t}_0^\Sigma (x, \nabla \tilde{\Phi}_{\alpha, \delta}(x)) \geq \frac{\alpha}{2} V_0(x) \geq 0\, . \]
Similar to Step 3 in the proof of (b), it follows that for some $C_1, C_2>0$
\begin{equation}\label{ii3}
V_\ep(x) + V^{\tilde{\Phi}_\alpha}(x) \geq \begin{cases} -\ep C_1\, , \quad &
x\in \Sigma\cap d^{-1}([0,D) \\ C_2\, , \quad & x\in \Sigma\cap d^{-1}([D, \infty)) \; ,\end{cases}
\end{equation}
where we set $\tilde{\Phi}_\alpha:= \tilde{\Phi}_{\alpha, \delta}$ for any $\delta<\delta_\alpha$.
If $F_+, F_-$ are defined as in \eqref{F+def} and \eqref{F-def} with $\Phi_{\alpha}$ replaced by
$\tilde{\Phi}_{\alpha}$, arguments similar to those in \eqref{Feigenschaft} and below lead to
\[
\left\|  e^{\frac{\tilde{\Phi}_\alpha}{\ep}} u \right\|_{L^2(\Sigma)}
\leq C \Bigl[ \ep^{-1}\left\| e^{\frac{\tilde{\Phi}_\alpha}{\ep}}
\left(H_\ep^{\Sigma}-E\right)u\right\|_{L^2(\Sigma)} +
 \| u \|_{L^2(\Sigma)}  \Bigr] \; ,
\]
which combined with \eqref{ii1} proves \eqref{ab1}.
\end{proof}

\begin{proof}[Proof of Theorem 1.5]
This is a consequence of (the proof of) Theorem \ref{Theorem2},(b). Since $d\in\Ce^2(\overline{\Sigma})$, we can
use $\Phi$ defined in \eqref{DefPhix} instead of $\Phi_\alpha$. The arguments in Step 2, Case 1 - 3,  show that
there are constants $C_0, C_1>0$ independent of $B$, $E$ and $\ep_0>0$ such that
for all $\ep<\ep_0$ 
\[
V_0(x) - \tilde{t}_0^\Sigma(x, \nabla\Phi(x)) \geq \begin{cases} 0\, ,&\, x\in\Sigma\cap d^{-1}([0, B\ep]) \\
\frac{B}{C_0}\ep \, ,&\, x\in\Sigma\cap d^{-1}([B\ep, \infty))
\end{cases}
\]
Since $\Phi\in\Ce^2(\overline{\Sigma})$, the same arguments as in Step 3 of the proof of Thm. \ref{Theorem2},(b), show
\[
V_\ep(x) + V^\Phi(x) \geq \begin{cases}  - C_2\,\ep & \qquad\mbox{for}
\quad x\in\Sigma\cap d^{-1}([0, B\ep]) \\
\left(\frac{B}{C_0}-C_3\right)\ep &
\qquad\mbox{for}\quad  x\in\Sigma\cap d^{-1}([B\ep,\infty)) 
\end{cases}
\]
for some $C_2,C_3>0$ independent of $B$ and $E$, where $V^{\Phi}:=V_{\ep,\Sigma}^{\Phi}$ is defined in \eqref{Vphiep}.
Defining $F_-$ and $F_+$ by \eqref{F+def} and \eqref{F-def} with $\Phi_\alpha$ replaced by $\Phi$, we get \eqref{weigequ}
by use of Lemma \ref{HepDchi} and Lemma \ref{Philem}. Note that Lemma \ref{tnullVnull} is not needed and neither is the 
continuity assumption \eqref{K0stetig}.\\
\end{proof}

\end{document}